\titleformat*{\section}{\large\bfseries}
\newtheorem{theorem}{Theorem}[section]
\newtheorem{lemma}[theorem]{Lemma}
\newtheorem{corollary}[theorem]{Corollary}
\newtheorem{definition}[theorem]{Definition}
\newtheorem{example}[theorem]{Example}
\newtheorem{proposition}[theorem]{Proposition}
\newtheorem{remark}[theorem]{Remark}
\numberwithin{equation}{section}
\title{Topological structure of quasi-partial b-metric-like spaces and some fixed point theorems}
\author{\large Anuradha Gupta$^1$ and Manu Rohilla$^2$\\ $^1$ {\small Department of Mathematics, Delhi College of Arts and Commerce,}\\ {\small University of Delhi, Netaji Nagar, New Delhi-110023, India.}\\{\small E-mail:dishna2@yahoo.in}\\{\small $^2$Department of Mathematics, University of Delhi, New Delhi-110007, India.}\\{\small E-mail:manurohilla25994@gmail.com}}
\date{}
\begin{document}
\maketitle

\begin{abstract}
The concept of quasi-partial b-metric-like spaces is being introduced and studied with the help of topology. Examples are also discussed to support the results. Some fixed point theorems are proved in the setting of quasi-partial b-metric-like spaces.\\
\textbf{Mathematics Subject Classification:} 54E35;  47H10\\
\textbf{Keywords:}Quasi-partial b-metric space, quasi-partial b-metric-like space, quasi-partial b-metric-like topology, product of quasi-partial b-metric-like spaces, fixed point.
\end{abstract}

\section{Introduction and Preliminaries}
Metric space has several generalizations which have come in the latter part of 19th century and in the beginning of 20th century. Czerwick \cite{4} generalized the metric space by defining b-metric space in 1993. Later on Matthews \cite{11} studied partial metric space and obtained fixed point theorems on it. Shukla \cite{13} introduced the notion of partial b-metric space as a generalization of partial-metric and b-metric spaces. The concept of quasi-partial metric was introduced by Karapinar \cite{10}  and discussed some general fixed point theorems on it.\\
The topology of metric space plays a vital role for studying the notions of convergence and continuity. Dhage \cite{5} studied the topological properties of D-metric spaces. Gupta and Gautam \cite{8} generalized quasi-partial metric space and introduced the concept of quasi-partial b-metric space. In this paper we have introduced the concept quasi-partial b-metric-like spaces and examined the topological structure of quasi-partial b-metric-like spaces. Examples are also provided to illustrate the results obtained. The existence and uniqueness of fixed point of self mappings on quasi-partial b-metric-like space is discussed. The product of quasi-partial b-metric-like spaces is also obtained in this research article.\\
Alghamdi, Hussain and Salimi \cite{1} defined b-metric-like space in the following way:
\begin{definition}
 \emph{ A b-metric-like on a non-empty set $X$ is a function $D:X \times X \to  [0,\infty)$ such that for all $x,y,z \in X$ and a constant $s \geq 1$, the following conditions hold:
\begin{enumerate}
\item[(bl$_{1}$)] if $D(x,y)=0$, then $x=y$,
\item[(bl$_{2}$)] $D(x,y)=D(y,x)$,
\item[(bl$_{3}$)] $D(x,y) \leq s[D(x,z)+D(z,y)]$.
\end{enumerate}
The pair $(X,D)$ is called a b-metric-like space.}
\end{definition}
Gupta and Gautam \cite{8} introduced the concept of quasi-partial b-metric space as follows.
\begin{definition}
\emph{\cite{8} A quasi-partial b-metric on a non-empty set $X$ is a function $qp_b:X \times X \to  [0,\infty)$ such that for some real number $s \geq 1$ and all $x,y,z \in X$,  the following conditions hold:
\begin{itemize}
\addtolength{\itemindent}{0.4cm}
\item[(QPb$_{1}$)] if $qp_b(x,x)=qp_b(x,y)=qp_b(y,y$), then $x=y$,
\item[(QPb$_{2}$)] $qp_b(x,x) \leq qp_b(x,y)$,
\item[(QPb$_{3}$)] $qp_b(x,x) \leq qp_b(y,x)$,
\item[(QPb$_{4}$)] $qp_b(x,y) \leq s[qp_b(x,z)+qp_b(z,y)]-qp_b(z,z)$.
\end{itemize}
The pair $(X,qp_{b})$ is called a quasi-partial b-metric space. The number $s$ is called the coefficient of $(X,qp_b)$.}
\end{definition}

\section{Quasi-Partial b-metric-like space}
\begin{definition}
\emph{A quasi partial b-metric-like on a non-empty set $X$ is a function $qp_{bl}:X \times X \to  [0,\infty)$ such that for some real number $s \geq 1$ and all $x,y,z \in X$, the following conditions hold:
\begin{itemize}
\addtolength{\itemindent}{0.5cm}
\item[(QPbl$_{1}$)] if $qp_{bl}(x,y)=0$, then $x=y$,
\item[(QPbl$_{2}$)] $qp_{bl}(x,x) \leq qp_{bl}(x,y)$,
\item[(QPbl$_{3}$)] $qp_{bl}(x,x) \leq qp_{bl}(y,x)$,
\item[(QPbl$_{4}$)] $qp_{bl}(x,y) \leq s[qp_{bl}(x,z)+qp_{bl}(z,y)]-qp_{bl}(z,z)$.
\end{itemize}
The pair $(X,qp_{bl})$ is called a quasi-partial b-metric-like space. The number $s$ is called the coefficient of $(X,qp_{bl})$.}
\end{definition}
\begin{example}
\emph{Let $X=[0,1]$. Define $qp_{bl}: X \times X \rightarrow [0,\infty)$ as\\ $qp_{bl}(x,y)=\left\{\begin{array}{cc}
(x+y)^2,& \mbox{if}\thinspace \thinspace x \neq y,\\
0, &\mbox{if} \thinspace \thinspace x=y.
\end{array}
\right. $\\
Clearly (QPbl$_1$)-(QPbl$_3$) hold for all $x,y \in X$. As $0 \leq (x-y)^2+4xz+4zy$ for all $x,y,z \in X$. This implies  $(x+y)^2 \leq 2[(x+z)^2+(z+y)^2]-4z^2$. Therefore, (QPbl$_4$) holds. Thus, $(X,qp_{bl})$ is a quasi-partial b-metric-like space with $s=2$.}
\end{example} 
\begin{example}
\emph{Let $X=[0,\infty)$. Define $qp_{bl}: X \times X \rightarrow [0,\infty)$ as $qp_{bl}(x,y)=\max\{x,y\}+\mid x-y \mid$.\\
Obviously, (QPbl$_1$)-(QPbl$_3$) are satisfied. Let $x,y,z \in X$. If $x \leq y \leq z$, then
\begin{align*}
\max\{x,y\}+\mid x-y \mid &\leq y+\mid x-z \mid +\mid z-y \mid \\
&\leq \max\{x,z\}+\mid x-z \mid + \max\{z,y\}+\mid z-y \mid -z.
\end{align*}
Similarly, in all other cases (QPbl$_4$) is satisfied. Therefore, $(X,qp_{bl})$ is a quasi-partial b-metric-like space with $s=1$.}
\end{example}
\begin{example}
\emph{Let $X=[0,1]$. Define $qp_{bl}: X \times X \rightarrow [0,\infty)$ as $qp_{bl}(x,y)=$ $\mid x-y \mid +x$.\\
It is easily seen that $(X,qp_{bl})$ is a quasi-partial b-metric-like space with $s=1$.}
\end{example}
\begin{example}
\emph{Let $(X,d')$ be a metric space. Define $qp_{bl}: X \times X \rightarrow [0,\infty)$ as 
$qp_{bl}(x,y)= (d'(x,y))^q$ where $q >1$.\\
Obviously, (QPbl$_1$)-(QPbl$_3$) are satisfied. For all $x,y,z \in X$ we have $ qp_{bl}(x,y)$  $\leq (d'(x,z)+d'(z,y))^q \leq  2^{q-1}[(d'(x,z))^q+(d'(z,y))^q]-qp_{bl}(z,z)$.
Therefore, $(X,qp_{bl})$ is a quasi-partial b-metric-like space with $s=2^{q-1}$.}
\end{example}
\begin{definition}
\emph{ A quasi-partial b-metric-like space $(X,qp_{bl})$ is said to be symmetric if $qp_{bl}(x,y)=qp_{bl}(y,x)$ for all $x,y \in X$.}
\end{definition}
Every quasi-partial b-metric space is quasi-partial b-metric-like space. But the converse need not be true as shown in the following example:\\
Let $X=\{0,1,2\}$. Define $d:X \times X \rightarrow [0,\infty)$ as\\ 
$d(0,0)=0, \hspace{2cm} d(0,1)=1, \hspace{2cm} d(0,2)=1,$\\
$d(1,0)=2, \hspace{2cm} d(1,1)=\frac{1}{2}, \hspace{2cm} d(1,2)=\frac{1}{2},$\\
$d(2,0)=3, \hspace{2cm} d(2,1)=3, \hspace{2cm} d(2,2)=\frac{1}{2}.$\\
Then $(X,d)$ is quasi-partial b-metric-like space with $s= 1$. But $(X,d)$ is not quasi-partial b-metric space as $d(1,1)=d(1,2)=d(2,2)$ but $1 \neq 2$. 
\section{Quasi-partial b-metric-like topology}
Mustafa \emph{et al.} \cite{12} discussed the topological structure of partial b-metric space. In this section we define topology on quasi-partial b-metric-like space and its  topological properties are studied. 
\begin{definition}
\emph{Let $(X,qp_{bl})$ be quasi-partial b-metric-like space. Then for $x_0 \in X$, $\epsilon >0$ the ball centered at $x_0$ and radius $\epsilon$ is defined as
\begin{align*}
B_{qp_{bl}} (x_0; \epsilon)=&\{ y \in X: qp_{bl}(x_0,y) < qp_{bl}(x_0,x_0)+ \epsilon \thinspace \thinspace \mbox{and}\\
& \hspace{0.2cm} qp_{bl}(y,x_0) < qp_{bl}(x_0,x_0)+ \epsilon \}.
\end{align*}}
\end{definition}
\begin{example}
\emph{Let $X=[0,1]$. Define $qp_{bl}: X \times X \rightarrow [0,\infty)$ as $qp_{bl}(x,y)=\max\{x,y\}+\mid x-y \mid$. Then $(X,qp_{bl})$ is a quasi-partial b-metric-like space with $s=1$.
The ball centered at $0$ and radius $1$ is given by\\
$B_{qp_{bl}}(0;1) = \{y \in [0,1]: qp_{bl}(0,y) < qp_{bl}(0,0)+1$ and $qp_{bl}(y,0) < qp_{bl}(0,0)+1\} = \{ y \in [0,1]: y+\mid y \mid  <1 \} =\Big[0,\frac{1}{2}\Big)$.}
\end{example}
\begin{example}
\emph{Let $X=[0,1]$. Define $qp_{bl}: X \times X \rightarrow [0, \infty)$ as $qp_{bl}: X \times X \rightarrow [0,\infty)$ as $qp_{bl}(x,y)=\left\{\begin{array}{cc}
(x+y)^2,& \mbox{if}\thinspace \thinspace  x \neq y,\\
0, &\mbox{if} \thinspace \thinspace x=y.
\end{array}
\right. $\\
 Then $(X,qp_{bl})$ is a quasi-partial b-metric-like space with $s=2$.
The ball centered at $0$ and radius $\frac{1}{2}$ is given by\\
 $B_{qp_{bl}} \Big(0;\frac{1}{2}\Big) = \Big\{y \in [0,1]: qp_{bl}(0,y) < qp_{bl}(0,0)+\frac{1}{2}$ and $qp_{bl}(y,0) < qp_{bl}(0,0)+\frac{1}{2}\Big\}= \Big\{ y \in [0,1]: y^2 < \frac{1}{2}\Big\}=\Big[0,\frac{1}{\sqrt{2}}\Big)$.}
\end{example}
\begin{proposition}\label{proposition3.4}
Let $(X,qp_{bl})$ be a quasi-partial b-metric-like space with coefficient $s \geq 1$, then for any $x \in X$ and $\epsilon>0$, and if $y \in B_{qp_{bl}}(x;\epsilon)$, then there exists $\delta >0$ such that $B_{qp_{bl}}(y; \delta)\subseteq B_{qp_{bl}}(x;\epsilon)$.
\end{proposition}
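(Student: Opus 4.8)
The statement asserts that every open ball is a neighbourhood of each of its points, which is precisely the condition needed for the family $\{B_{qp_{bl}}(x;\epsilon)\}$ to serve as a base for a topology on $X$. The plan is the standard ``interior point'' argument: fix $x\in X$, $\epsilon>0$ and $y\in B_{qp_{bl}}(x;\epsilon)$, extract a radius $\delta>0$ from the slack in the two inequalities that place $y$ in the ball, and then push an arbitrary point of $B_{qp_{bl}}(y;\delta)$ back into $B_{qp_{bl}}(x;\epsilon)$ by means of the triangle inequality (QPbl$_4$).

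First I would record what the membership $y\in B_{qp_{bl}}(x;\epsilon)$ provides. By definition,
\[
qp_{bl}(x,y)<qp_{bl}(x,x)+\epsilon \quad\text{and}\quad qp_{bl}(y,x)<qp_{bl}(x,x)+\epsilon,
\]
so the two quantities
\[
\epsilon_1=qp_{bl}(x,x)+\epsilon-qp_{bl}(x,y),\qquad \epsilon_2=qp_{bl}(x,x)+\epsilon-qp_{bl}(y,x)
\]
are strictly positive; these are the reserves of room I intend to spend. I would also note, from (QPbl$_2$) and (QPbl$_3$), that $qp_{bl}(y,y)\le qp_{bl}(x,y)$ and $qp_{bl}(y,y)\le qp_{bl}(y,x)$, since the self-distance term $qp_{bl}(y,y)$ will reappear (with a minus sign) when I invoke the triangle inequality through $y$.

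Next, for a radius $\delta>0$ yet to be fixed, take any $z\in B_{qp_{bl}}(y;\delta)$, so that $qp_{bl}(y,z)<qp_{bl}(y,y)+\delta$ and $qp_{bl}(z,y)<qp_{bl}(y,y)+\delta$. Applying (QPbl$_4$) with $y$ as the middle point in each order gives
\[
qp_{bl}(x,z)\le s\,qp_{bl}(x,y)+s\,qp_{bl}(y,z)-qp_{bl}(y,y),\qquad qp_{bl}(z,x)\le s\,qp_{bl}(z,y)+s\,qp_{bl}(y,x)-qp_{bl}(y,y).
\]
Substituting the ball inequalities for $z$ converts these into the bounds $s\,qp_{bl}(x,y)+(s-1)qp_{bl}(y,y)+s\delta$ and $s\,qp_{bl}(y,x)+(s-1)qp_{bl}(y,y)+s\delta$. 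The remaining task is to choose $\delta$ small enough that both are $<qp_{bl}(x,x)+\epsilon$, which would yield $z\in B_{qp_{bl}}(x;\epsilon)$ and hence $B_{qp_{bl}}(y;\delta)\subseteq B_{qp_{bl}}(x;\epsilon)$.

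The step I expect to be the genuine obstacle is exactly this final choice of $\delta$, and the culprit is the coefficient $s$. When $s=1$ the two bounds collapse to $qp_{bl}(x,y)+\delta$ and $qp_{bl}(y,x)+\delta$, and taking $\delta<\min\{\epsilon_1,\epsilon_2\}$ closes the argument at once. For $s>1$, however, the triangle inequality multiplies $qp_{bl}(x,y)$ --- known only to lie strictly below $qp_{bl}(x,x)+\epsilon$ --- by $s$. Writing $s\,qp_{bl}(x,y)=qp_{bl}(x,y)+(s-1)qp_{bl}(x,y)$, the requirement reduces to $(s-1)\bigl[qp_{bl}(x,y)+qp_{bl}(y,y)\bigr]+s\delta<\epsilon_1$ (and symmetrically with $\epsilon_2$), where the $\delta$-independent residual $(s-1)\bigl[qp_{bl}(x,y)+qp_{bl}(y,y)\bigr]$ may already exceed the available slack $\epsilon_1$, in which case no $\delta>0$ suffices. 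So the crux is whether the reserves $\epsilon_1,\epsilon_2$ can dominate this residual; I would first settle the cases $s=1$ and the symmetric setting cleanly, and approach the regime $s>1$ with caution, since that is precisely where the estimate threatens to break down and where I would scrutinize the author's argument most closely.
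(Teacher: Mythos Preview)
For $s=1$ your argument is complete and coincides with the paper's: the triangle inequality (QPbl$_4$) with $y$ as midpoint, together with $qp_{bl}(y,z),qp_{bl}(z,y)<qp_{bl}(y,y)+\delta$, gives $qp_{bl}(x,z),qp_{bl}(z,x)<qp_{bl}(x,x)+\epsilon$ once $\delta\le\min\{\epsilon_1,\epsilon_2\}$. Where your proposal stops, the paper pushes on: it splits into cases according to whether $qp_{bl}(x,x)=qp_{bl}(x,y)=qp_{bl}(y,y)$, and in each $s>1$ subcase applies the well-ordering principle to the least $n$ for which a quantity such as $qp_{bl}(x,y)+qp_{bl}(y,x)-\tfrac{1}{s}\,qp_{bl}(x,x)$ exceeds $\epsilon/(2s^{n+2})$; from ``$n-1$ is not in the set'' it then reads off an upper bound $\epsilon/(2s^{n+1})$ on that quantity and takes $\delta=\epsilon/(2s^{n+1})$.

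Your instinct that the $s>1$ estimate ``threatens to break down'' is, however, exactly right, and the paper's device does not repair it. When the least element equals $1$, the inference ``$n-1$ is not in the set'' is vacuous and yields no upper bound; more to the point, the quantity in question need not be small compared with $\epsilon$. In fact the proposition is false for $s>1$. Take $X=\{a,b,c\}$, $s=2$, and set $qp_{bl}(a,a)=0$, $qp_{bl}(a,c)=qp_{bl}(c,a)=2$, and every remaining value equal to $0.9$; axioms (QPbl$_1$)--(QPbl$_4$) are verified directly (the only nontrivial instance is $qp_{bl}(a,c)=2\le 2(0.9+0.9)-0.9=2.7$). Then $B_{qp_{bl}}(a;1)=\{a,b\}$, yet $qp_{bl}(b,c)=qp_{bl}(c,b)=qp_{bl}(b,b)=0.9$ forces $c\in B_{qp_{bl}}(b;\delta)$ for every $\delta>0$, so no ball about $b$ lies inside $B_{qp_{bl}}(a;1)$. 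This is precisely the scenario you anticipated: the residual $(s-1)\bigl[qp_{bl}(x,y)+qp_{bl}(y,y)\bigr]=1.8$ overwhelms the available slack $\epsilon_1=0.1$, and no choice of $\delta$ can compensate.
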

\begin{proof}
Suppose that $y \in B_{qp_{bl}}(x;\epsilon)$. If $y=x$ take $\delta = \epsilon$. Let $y \neq x$, then $qp_{bl}(x,y) \neq 0$. Now we consider the following two cases:\\ 
\textbf{Case-1} If $qp_{bl}(x,x)= qp_{bl}(x,y)= qp_{bl}(y,y).$\\
\textsf{Subcase-1} If $s=1$. Take $\delta=\epsilon$. Suppose that $z \in B_{qp_{bl}}(y;\delta)$ then $qp_{bl}(z,y)< qp_{bl}(y,y)+\delta$ and $qp_{bl}(y,z) < qp_{bl}(y,y)+\delta$. We observe that $qp_{bl}(x,z) \leq qp_{bl}(x,y)+ qp_{bl}(y,z)-qp_{bl}(y,y)
 < qp_{bl}(x,y)+qp_{bl}(y,y)+\delta-qp_{bl}(y,y)=qp_{bl}(x,x)+\epsilon$. Similarly, $qp_{bl}(z,x)  \leq qp_{bl}(x,x)+\epsilon $. Therefore, $B_{qp_{bl}}(y; \delta)\subseteq B_{qp_{bl}}(x; \epsilon)$.\\
\textsf{Subcase-2} If $s>1$. Consider the set
\[
A=\Big\{n \in \mathbb{N} : qp_{bl}(x,x)>\frac{\epsilon}{2s^{n+1}(2s-1)}\Big\}.
\]
By the Archimedean property, $A$ is a non-empty set. Then by well-ordering principle, $A$ has a least element say $m$. This implies that $m-1 \notin A$. This gives
\begin{eqnarray}\label{3.1}
 qp_{bl}(x,x) \leq \frac{\epsilon}{2s^{m}(2s-1)}.
 \end{eqnarray}
Take $\delta = \frac{\epsilon}{2s^{m+1}}$. Suppose that  $z \in B_{qp_{bl}}(y;\delta)$, then $qp_{bl}(z,y)< qp_{bl}(y,y)+\delta$ and $qp_{bl}(y,z) < qp_{bl}(y,y)+\delta$. As $qp_{bl}(z,x) \leq s[qp_{bl}(z,y)+ qp_{bl}(y,x)]- qp_{bl}(y,y)\leq s[qp_{bl}(y,x)+qp_{bl}(y,x)]+ s\delta =qp_{bl}(x,x)+(2s-1)qp_{bl}(x,x)+\frac{ s \epsilon}{2s^{m+1}}$. Using \eqref{3.1} we get, $qp_{bl}(z,x) \leq qp_{bl}(x,x)+\frac{\epsilon}{2s^m}+\frac{s \epsilon}{2s^{m+1}}< qp_{bl}(x,x)+\epsilon$. Similarly, $qp_{bl}(x,z)  \leq qp_{bl}(x,x)+\epsilon $. Therefore, $B_{qp_{bl}}(y; \delta)\subseteq B_{qp_{bl}}(x; \epsilon)$.\\
\textbf{Case-2} If $qp_{bl}(x,x) \leq qp_{bl}(x,y)$ and $qp_{bl}(x,x)<qp_{bl}(y,x)$.\\
\textsf{Subcase-1} If $s=1$. Consider the set
$$B=\Big\{n \in \mathbb{N} : qp_{bl}(x,y)+qp_{bl}(y,x)-qp_{bl}(x,x)>\frac{\epsilon}{2^{n+2}}\Big\}.$$
By the Archimedean property, $B$ is a non-empty set. Then by well-ordering principle, $B$ has a least element say $p$. This implies that $p-1 \notin B$. This gives 
\begin{eqnarray}\label{3.2}
 qp_{bl}(x,y)+qp_{bl}(y,x)-qp_{bl}(x,x) \leq \frac{\epsilon}{2^{p+1}}. 
 \end{eqnarray}
Take $\delta = \frac{\epsilon}{2^{p+1}}$. Suppose that $z \in B_{qp_{bl}}(y;\delta)$, then $ qp_{bl}(z,y)< qp_{bl}(y,y)+\delta$ and $qp_{bl}(y,z) < qp_{bl}(y,y)+\delta$. As $qp_{bl}(z,x) \leq qp_{bl}(z,y)+qp_{bl}(y,x)-qp_{bl}(y,y)<qp_{bl}(y,y)+\delta +qp_{bl}(y,x)-qp_{bl}(y,y)$. Using \eqref{3.2} we get, $qp_{bl}(z,x)
\leq \delta +\frac{\epsilon}{2^{p+1}}+qp_{bl}(x,x) \leq qp_{bl}(x,x)+\epsilon$. Similarly, $qp_{bl}(x,z)  \leq qp_{bl}(x,x)+\epsilon $. Therefore, $B_{qp_{bl}}(y; \delta)\subseteq B_{qp_{bl}}(x; \epsilon)$.\\
\textsf{Subcase-2} If $s>1$. Consider the set
$$C=\Big\{n \in \mathbb{N} : qp_{bl}(x,y)+qp_{bl}(y,x)-\frac{1}{s}qp_{bl}(x,x)>\frac{\epsilon}{2s^{n+2}}\Big\}.$$
By the Archimedean property, $C$ is a non-empty set. Then by well-ordering principle, $C$ has a least element say $r$. This implies that $ r-1 \notin C$. This gives 
\begin{eqnarray}\label{3.3}
 qp_{bl}(x,y)+qp_{bl}(y,x)-\frac{1}{s}qp_{bl}(x,x) \leq \frac{\epsilon}{2s^{r+1}}.
 \end{eqnarray}
Take $\delta = \frac{\epsilon}{2s^{r+1}}$. Suppose that $z \in B_{qp_{bl}}(y;\delta)$, then $qp_{bl}(z,y)< qp_{bl}(y,y)+\delta$ and $qp_{bl}(y,z) < qp_{bl}(y,y)+\delta$. As $qp_{bl}(z,x) \leq s[qp_{bl}(z,y)+qp_{bl}(y,x)]-qp_{bl}(y,y) <s[qp_{bl}(y,y)+\delta +qp_{bl}(y,x)]$. Using \eqref{3.3} we get, $qp_{bl}(z,x) \leq s[qp_{bl}(y,y)+\delta +\frac{\epsilon}{2s^{r+1}}+\frac{1}{s}qp_{bl}(x,x)-qp_{bl}(x,y)] \leq s[qp_{bl}(x,y)+\delta +\frac{\epsilon}{2s^{r+1}}+\frac{1}{s}qp_{bl}(x,x)-qp_{bl}(x,y)]\leq qp_{bl}(x,x)+\epsilon$. Similarly, $qp_{bl}(x,z)  \leq qp_{bl}(x,x)+\epsilon $.  
Therefore, $B_{qp_{bl}}(y; \delta)\subseteq B_{qp_{bl}}(x; \epsilon)$.
\end{proof}
The family of all $qp_{bl}$-balls is denoted by $\mathcal{B}=\{B_{{qp}_{bl}}(x;\epsilon): x \in X, \epsilon>0\}$. In the following result it is shown that $\mathcal{B}$ is the base of topology $\tau_{{qp}_{bl}}$ on $X$, where $\tau_{{qp}_{bl}}$ is the quasi-partial b-metric-like topology.
\begin{theorem}
The collection $\mathcal{B}=\{B_{{qp}_{bl}}(x;\epsilon): x \in X, \epsilon>0\}$ of all the open balls forms a basis for a topology $\tau_{{qp}_{bl}}$ on $X$.
\end{theorem}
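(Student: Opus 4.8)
The plan is to verify the two standard axioms characterizing a basis: first, that the members of $\mathcal{B}$ cover $X$; and second, that any finite intersection of basis elements can be refined from within by a further basis element. The substantive analytic work has already been done in Proposition \ref{proposition3.4}, so I would invoke that result as a black box and keep the remaining argument short.

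First I would dispatch the covering axiom. For any $x \in X$ and any $\epsilon > 0$, the point $x$ itself satisfies $qp_{bl}(x,x) < qp_{bl}(x,x) + \epsilon$ in both coordinates, using only $\epsilon > 0$; hence $x \in B_{qp_{bl}}(x;\epsilon)$. Thus every point lies in at least one member of $\mathcal{B}$ and $\bigcup \mathcal{B} = X$.

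Next I would establish the refinement axiom. Suppose $x \in B_{qp_{bl}}(x_1;\epsilon_1) \cap B_{qp_{bl}}(x_2;\epsilon_2)$ with both balls in $\mathcal{B}$. Applying Proposition \ref{proposition3.4} to the membership $x \in B_{qp_{bl}}(x_1;\epsilon_1)$ produces $\delta_1 > 0$ with $B_{qp_{bl}}(x;\delta_1) \subseteq B_{qp_{bl}}(x_1;\epsilon_1)$, and applying it again to $x \in B_{qp_{bl}}(x_2;\epsilon_2)$ produces $\delta_2 > 0$ with $B_{qp_{bl}}(x;\delta_2) \subseteq B_{qp_{bl}}(x_2;\epsilon_2)$. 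Setting $\delta = \min\{\delta_1,\delta_2\}$ and invoking the monotonicity of concentric balls in the radius — namely that $\delta \leq \delta_i$ forces $B_{qp_{bl}}(x;\delta) \subseteq B_{qp_{bl}}(x;\delta_i)$, which is immediate from the two defining inequalities since a smaller $\delta$ only tightens the bound $qp_{bl}(x,x)+\delta$ — I would conclude $B_{qp_{bl}}(x;\delta) \subseteq B_{qp_{bl}}(x_1;\epsilon_1) \cap B_{qp_{bl}}(x_2;\epsilon_2)$. Since $x \in B_{qp_{bl}}(x;\delta)$ by the covering step, the refinement axiom holds, and an induction extends this to any finite intersection.

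Having checked both axioms, the standard generation theorem for topologies from a basis guarantees that $\mathcal{B}$ is a basis for a unique topology $\tau_{qp_{bl}}$ on $X$, whose open sets are precisely the arbitrary unions of members of $\mathcal{B}$. I do not expect a genuine obstacle here: the only point deserving a word of care is the concentric-ball monotonicity, a one-line consequence of Definition of the ball, while all the real difficulty — the case split on $s=1$ versus $s>1$ and on the relative sizes of $qp_{bl}(x,x)$, $qp_{bl}(x,y)$ and $qp_{bl}(y,x)$ — is entirely absorbed into Proposition \ref{proposition3.4}.
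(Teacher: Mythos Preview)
Your proposal is correct and follows essentially the same route as the paper: verify the covering axiom by noting $x \in B_{qp_{bl}}(x;\epsilon)$, then for the intersection axiom invoke Proposition~\ref{proposition3.4} twice to obtain $\delta_1,\delta_2$, set $\delta=\min\{\delta_1,\delta_2\}$, and use radius-monotonicity of concentric balls. The paper does exactly this, only spelling out the monotonicity step via the two displayed inequalities \eqref{3.4} and \eqref{3.5} rather than stating it as a one-line remark.
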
 
\begin{proof}
It is enough to show that the collection $\mathcal{B}$ satisfies the following two conditions:
\begin{itemize}
\item[(i)] $X \subseteq \Big(\bigcup\limits_{\substack{x \in X,\\\epsilon>0}}B_{{qp}_{bl}}(x;\epsilon)\Big)$ and
\item[(ii)] if for some $x,y \in X$, $a \in B_{{qp}_{bl}}(x;\epsilon_1)\cap B_{{qp}_{bl}}(y; \epsilon_2)$ be an arbitrary point, then there is a ball $B_{{qp}_{bl}}(a;\delta)$ for some $\delta >0$ such that
$B_{{qp}_{bl}}(a;\delta) \subseteq B_{{qp}_{bl}}(x;\epsilon_1) \cap B_{{qp}_{bl}}(y;\epsilon_2)$.
\end{itemize}
(i) Suppose that  $x\in X$. Clearly $x\in B_{{qp}_{bl}}(x;\epsilon)$ for $\epsilon>0$. This gives $x\in B_{{qp}_{bl}}(x;\epsilon) \subseteq \Big(\bigcup\limits_{\substack{x \in X,\\\epsilon>0}}B_{{qp}_{bl}}(x;\epsilon)\Big)$.\\
(ii) Suppose that $a \in B_{{qp}_{bl}}(x; \epsilon_1)\cap B_{{qp}_{bl}}(y; \epsilon_2)$.
By Proposition \ref{proposition3.4}, there exist $\delta_1,\delta_2 >0$ such that  $B_{{qp}_{bl}}(a;\delta_1) \subseteq B_{{qp}_{bl}}(x;\epsilon_1)$ and $B_{{qp}_{bl}}(a;\delta_2) \subseteq B_{{qp}_{bl}}(x;\epsilon_2)$. Choose $\delta= \min \{\delta_1,\delta_2\}$. Suppose that  $z \in B_{{qp}_{bl}}(a;\delta)$, then ${qp}_{bl}(a,z)< {qp}_{bl}(a,a)+\delta$ and ${qp}_{bl}(z,a)< {qp}_{bl}(a,a)+\delta$. This gives
\begin{eqnarray}\label{3.4}
 {qp}_{bl}(a,z)< {qp}_{bl}(a,a)+\delta_1\quad \mbox{and} \quad  {qp}_{bl}(z,a)< {qp}_{bl}(a,a)+\delta_1, 
 \end{eqnarray}
and
\begin{eqnarray}\label{3.5}
{qp}_{bl}(a,z)< {qp}_{bl}(a,a)+\delta_2 \quad \mbox{and} \quad {qp}_{bl}(z,a)< {qp}_{bl}(a,a)+\delta_2.
\end{eqnarray}
From \eqref{3.4} and \eqref{3.5} we get, $B_{{qp}_{bl}}(a;\delta) \subseteq B_{{qp}_{bl}}(x;\epsilon_1) \cap B_{{qp}_{bl}}(y;\epsilon_2)$.
\end{proof}
Therefore, the quasi-partial b-metric-like space $(X, qp_{bl})$ together with a topology $\tau_{{qp}_{bl}}$ is called a quasi-partial b-metric-like topological space and $\tau_{{qp}_{bl}}$ is called a quasi-partial b-metric-like topology on $X$.

 A space $X$ is called \emph{Hausdroff}(or $\mathcal{T}_2$) if for every pair of distinct points $x,y \in X$, there exist disjoint open sets $U$ and $V$ with $x \in U$ and $y \in V$. A space $X$ is called $\mathcal{T}_1$ if for each pair of distinct points $x,y \in X$, there is an open set containing $x$ but not $y$, and another open set containing $y$ but not $x$. A space $X$ is called $\mathcal{T}_0$ if for any two distinct points of $X$, there is an open set which contains one point but not the other.
\begin{remark}\label{remark1}
\emph{A quasi-partial b-metric-like space need not be a $T_0$ space. Consider the following example:}
\end{remark}
Let $X=\{0,1,2\}$. Define $qp_{bl}:X \times X \rightarrow [0,1]$ as\\ 
$qp_{bl}(0,0)=0, \hspace{2cm} qp_{bl}(0,1)=1, \hspace{2cm} qp_{bl}(0,2)=1,$\\
$qp_{bl}(1,0)=1, \hspace{2cm} qp_{bl}(1,1)=1, \hspace{2cm} qp_{bl}(1,2)=1,$\\
$qp_{bl}(2,0)=1, \hspace{2cm} qp_{bl}(2,1)=1, \hspace{2cm} qp_{bl}(2,2)=1.$ \\
Then $(X,qp_{bl})$ is a quasi-partial b-metric-like space with $s=1$. Consider 
\begin{align*}
 B_{{qp}_{bl}}(0;\epsilon)&= \{ y \in X : qp_{bl}(0,y) < qp_{bl}(0,0) + \epsilon \thinspace \mbox{and} \thinspace qp_{bl}(y,0) < qp_{bl}(0,0) + \epsilon \}\\
&=\left\{\begin{array}{ll}
\{0\},& \mbox{if}\thinspace \thinspace  \epsilon \leq 1,\\
\{0,1,2\},&\mbox{if} \thinspace \thinspace \epsilon >1.
\end{array}
\right.
\end{align*}
\begin{align*}
B_{{qp}_{bl}}(1;\epsilon)&= \{ y \in X : qp_{bl}(1,y) < qp_{bl}(1,1) + \epsilon \thinspace \mbox{and} \thinspace qp_{bl}(y,1) < qp_{bl}(1,1) + \epsilon \}\\
&=\{0,1,2\}.
\end{align*}
\begin{align*}
B_{{qp}_{bl}}(2;\epsilon)&= \{ y \in X : qp_{bl}(2,y) < qp_{bl}(2,2) + \epsilon \thinspace \mbox{and} \thinspace qp_{bl}(y,2) < qp_{bl}(2,2) + \epsilon \}\\
&=\{0,1,2\}.
\end{align*}
Therefore, $\tau_{{qp}_{bl}}= \{ \phi, \{0\}, X \}$. We see  $1,2 \in X$ but there does not exist an open set containing 1 but not 2 and there does not exist an open set containing 2 but not 1. Thus, $(X, qp_{bl})$ is not a $T_0$ space.
\begin{theorem}
A quasi-partial b-metric-like space need not be a $T_1$ or $T_2$ space.
\end{theorem}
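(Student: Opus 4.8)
The plan is to establish this negative result by exhibiting a counterexample rather than attempting any general argument. I would first invoke the standard chain of implications among the separation axioms, namely $T_2 \Rightarrow T_1 \Rightarrow T_0$: every Hausdorff space is $T_1$, and every $T_1$ space is $T_0$. Consequently, a space that fails to be $T_0$ automatically fails to be both $T_1$ and $T_2$, so it suffices to produce a single quasi-partial b-metric-like space whose induced topology $\tau_{qp_{bl}}$ does not separate some pair of distinct points. The space already constructed in Remark \ref{remark1}, on $X=\{0,1,2\}$, is the natural candidate, since the computations performed there deliver the entire topology for free; alternatively one could write down a fresh finite example of the same flavour. Because a single example failing $T_1$ simultaneously rules out $T_2$ (the stronger axiom), one counterexample settles both halves of the statement.

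The key steps I would carry out are: first, confirm that the chosen function $qp_{bl}$ genuinely satisfies (QPbl$_1$)--(QPbl$_4$), so that it is a legitimate quasi-partial b-metric-like space with some coefficient $s\ge 1$; second, compute, for each centre $x\in X$ and each radius $\epsilon>0$, the ball $B_{qp_{bl}}(x;\epsilon)$ from the definition of $qp_{bl}$-balls, paying attention to the self-distances $qp_{bl}(x,x)$; third, assemble these balls, via the base property established above, into the explicit topology $\tau_{qp_{bl}}$; and fourth, locate two distinct points for which every open set containing one also contains the other. In the example of Remark \ref{remark1} the topology is $\{\emptyset,\{0\},X\}$, so the only open set containing $1$ is $X$, which also contains $2$; hence no open set isolates $1$ from $2$, the $T_1$ axiom fails, and therefore $T_2$ fails a fortiori.

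The point requiring care, rather than a genuine obstacle, is making clear that the failure is intrinsic to the quasi-partial b-metric-like structure and not an artifact of a badly chosen distance. The mechanism driving the failure is that the diagonal values $qp_{bl}(1,1)$ and $qp_{bl}(2,2)$ are strictly positive and coincide with the cross-distances to the other points, so the smallest admissible balls centred at $1$ or $2$ are forced to swallow the remaining points; this is precisely what cannot occur in an ordinary metric space, where self-distances vanish and singletons are closed. When writing the argument I would therefore stress that the positivity of the diagonal terms $qp_{bl}(x,x)$ is what obstructs separation, so that the counterexample is representative rather than incidental. Once $\tau_{qp_{bl}}$ is read off, the verification that $1$ and $2$ are topologically indistinguishable is immediate, and the theorem follows.
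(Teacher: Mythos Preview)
Your proposal is correct and matches the paper's approach exactly: the paper places this theorem immediately after Remark~\ref{remark1}, which exhibits the same space on $X=\{0,1,2\}$ with topology $\{\emptyset,\{0\},X\}$ and observes it is not $T_0$, so the theorem follows without further proof via the implications $T_2\Rightarrow T_1\Rightarrow T_0$ you invoke.
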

\begin{definition}
\emph{Let $(X,qp_{bl})$ be a quasi-partial b-metric-like space. Then\\
(i) A sequence $\{x_n\} \subseteq X$ \textit{converges} to $x \in X$ if and only if $$\lim\limits_{n\rightarrow \infty }qp_{bl}(x_n,x)=qp_{bl}(x,x)=\lim\limits_{n\rightarrow \infty }qp_{bl}(x,x_n).$$
(ii) A sequence $\{x_n\} \subseteq X$ is called a \textit{Cauchy} sequence if and only if
 $$\lim\limits_{n,m \rightarrow \infty} qp_{bl}(x_n,x_m) \thinspace \mbox{and} \thinspace \lim\limits_{n,m \rightarrow \infty} qp_{bl}(x_m,x_n) \thinspace \mbox{exist (and are finite)}.$$
 (iii) A quasi-partial b-metric-like space $(X,qp_{bl})$ is said to be \textit{complete} if every Cauchy sequence $\{x_n\} \subseteq X$ converges with respect to $\tau_{{qp}_{bl}}$ to a point $x \in X$ such that 
 $$\lim\limits_{n,m \rightarrow \infty} qp_{bl}(x_n,x_m) =qp_{bl}(x,x)=\lim\limits_{n,m \rightarrow \infty} qp_{bl}(x_m,x_n).$$} 
\end{definition}
\begin{remark}
\emph{In Remark \ref{remark1},  let $x_n=1$ for each $ n \in \mathbb{N}$. Then $ \lim\limits_{n \rightarrow \infty} qp_{bl}(x_n,1)$ $=qp_{bl}(1,1)= \lim\limits_{n \rightarrow \infty} qp_{bl}(1,x_n)$. Therefore, $x_n \rightarrow 1$. Also, $ \lim\limits_{n \rightarrow \infty} qp_{bl}(x_n,2)=qp_{bl}(1,2)= qp_{bl}(2,2)=\lim\limits_{n \rightarrow \infty} qp_{bl}(2,x_n)$. Therefore, $x_n \rightarrow 2$.
Hence, in a quasi-partial b-metric-like space the limit of a  sequence is not necessarily unique.}
\end{remark}
\begin{remark}
 \emph{In a quasi-partial b-metric-like space $(X, qp_{bl})$ the function $qp_{bl}$ need not be continuous in any of its variables. The following example illustrates this fact.}
\end{remark}
\begin{example}
\emph{Let $X= \mathbb{N} \cup \{+\infty\}$. Define $qp_{bl}:X \times X \rightarrow [0,\infty )$ as
\[
qp_{bl}(x,y):=\left\{\begin{array}{ll}
0, &x=+\infty, y=+ \infty, \\
\mid \frac{1}{x} -\frac{1}{y} \mid , &\mbox{both}\thinspace  x \thinspace \mbox{and} \thinspace y \thinspace  \mbox{are odd}, \\
\frac{1}{x}, & x \thinspace \mbox{is odd and}\thinspace  y \thinspace \mbox{is} \thinspace \infty, \\
\frac{1}{2x}, & x \thinspace  \mbox{is} \thinspace \infty \thinspace \mbox{and}\thinspace  y \thinspace \mbox{is odd,}\\
1, & \mbox{otherwise.}
\end{array}
\right.
\]
Then $(X,qp_{bl})$ is a quasi-partial b-metric-like space with $s=2$.
Let $x_n=2n+1$ for each $n \in \mathbb{N}$. Then $qp_{bl}(x_n,2)=1=qp_{bl}(2,2)$ and $qp_{bl}(2,x_n)=1=qp_{bl}(2,2).$  Therefore, $x_n \rightarrow 2$.
But $qp_{bl}(x_n,3) \rightarrow \frac{1}{3}$ and $qp_{bl}(2,3)=1$.}
\end{example}
For a quasi-partial b-metric-like space $(X,qp_{bl})$ the function $D: X \times X \rightarrow [0, \infty)$ defined by $D(x,y)=qp_{bl}(x,y)+qp_{bl}(y,x)$ is a b-metric-like on $X$.
\begin{definition}
 \emph{\cite{1} Let $(X,qp_{bl})$ be a quasi-partial b-metric-like space and $(X,D)$ be the corresponding b-metric-like space. Then the open ball centered at $x_0 \in X$ and radius $\epsilon>0$ is defined as 
$$B_{D}(x_0;\epsilon)=\{y\in X: \mid D(x_0,y)-D(x_0,x_0) \mid < \epsilon \}.$$}
\end{definition}
\begin{proposition}
Let $(X,qp_{bl})$ be a quasi-partial b-metric-like space with coefficient $s \geq 1$, then for all $x\in X$ and $\epsilon>0$
$$ B_{{qp}_{bl}}\Big(x;\frac{\epsilon}{2}\Big) \subseteq  B_{D}(x;\epsilon) \subseteq  B_{{qp}_{bl}}(x;\delta),$$
where $\delta= s[\epsilon+2qp_{bl}(x,x)]$.
\end{proposition}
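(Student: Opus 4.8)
The plan is to unwind both inclusions directly from the definitions, the only inputs being the ``small self-distance'' axioms (QPbl$_2$) and (QPbl$_3$), the nonnegativity of $qp_{bl}$, and the assumption $s\geq 1$; notably the triangle inequality (QPbl$_4$) is not needed. First I would record the elementary identity $D(x,x)=qp_{bl}(x,x)+qp_{bl}(x,x)=2qp_{bl}(x,x)$, so that the $b$-metric-like ball reads $B_D(x;\epsilon)=\{y\in X:\lvert qp_{bl}(x,y)+qp_{bl}(y,x)-2qp_{bl}(x,x)\rvert<\epsilon\}$.

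For the first inclusion $B_{qp_{bl}}(x;\epsilon/2)\subseteq B_D(x;\epsilon)$, suppose $y\in B_{qp_{bl}}(x;\epsilon/2)$, so that $qp_{bl}(x,y)<qp_{bl}(x,x)+\epsilon/2$ and $qp_{bl}(y,x)<qp_{bl}(x,x)+\epsilon/2$. Adding these two inequalities gives the upper estimate $D(x,y)-D(x,x)<\epsilon$. For the matching lower estimate I would invoke (QPbl$_2$) and (QPbl$_3$), which yield $qp_{bl}(x,x)\leq qp_{bl}(x,y)$ and $qp_{bl}(x,x)\leq qp_{bl}(y,x)$; summing these shows $D(x,y)\geq 2qp_{bl}(x,x)=D(x,x)$, hence $D(x,y)-D(x,x)\geq 0>-\epsilon$. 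Combining the two estimates gives $\lvert D(x,y)-D(x,x)\rvert<\epsilon$, i.e.\ $y\in B_D(x;\epsilon)$.

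For the second inclusion $B_D(x;\epsilon)\subseteq B_{qp_{bl}}(x;\delta)$, suppose $y\in B_D(x;\epsilon)$; then in particular $D(x,y)<D(x,x)+\epsilon=2qp_{bl}(x,x)+\epsilon$. Since $qp_{bl}(x,y)\geq 0$ and $qp_{bl}(y,x)\geq 0$, each of the two summands of $D(x,y)$ is bounded above by $D(x,y)$, so $qp_{bl}(x,y)<2qp_{bl}(x,x)+\epsilon$ and $qp_{bl}(y,x)<2qp_{bl}(x,x)+\epsilon$. It then remains to check that this common upper bound does not exceed $qp_{bl}(x,x)+\delta$. With $\delta=s[\epsilon+2qp_{bl}(x,x)]$ and $s\geq 1$ one has $qp_{bl}(x,x)+\delta=qp_{bl}(x,x)+s\epsilon+2s\,qp_{bl}(x,x)\geq qp_{bl}(x,x)+\epsilon+2qp_{bl}(x,x)\geq 2qp_{bl}(x,x)+\epsilon$, so both distances lie below $qp_{bl}(x,x)+\delta$ and thus $y\in B_{qp_{bl}}(x;\delta)$.

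There is no genuine obstacle here: the argument is pure definition-chasing. The one point that deserves care is the lower bound in the first inclusion, where one must remember that in this non-symmetric ``like'' setting the self-distance $qp_{bl}(x,x)$ need not vanish, so the inequality $D(x,y)\geq D(x,x)$ is exactly what prevents $D(x,y)-D(x,x)$ from dropping below $-\epsilon$; this is precisely what (QPbl$_2$) and (QPbl$_3$) supply. The factor $s$ in $\delta$ is not forced by the estimate (the sharper choice $\delta=qp_{bl}(x,x)+\epsilon$ already works) but is retained to give a clean, coefficient-uniform radius.
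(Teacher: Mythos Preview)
Your proof is correct. The first inclusion is handled exactly as in the paper. For the second inclusion your route is actually more direct than the paper's: the paper invokes (QPbl$_4$) with $x$ as the intermediate point to obtain $qp_{bl}(x,z)\leq s[qp_{bl}(x,x)+qp_{bl}(x,z)]$, then substitutes the bound from $\lvert D(x,z)-D(x,x)\rvert<\epsilon$ and uses (QPbl$_3$) to conclude $qp_{bl}(x,z)<\delta$; you instead observe that nonnegativity alone gives $qp_{bl}(x,y)\leq D(x,y)<2qp_{bl}(x,x)+\epsilon$ and then compare with $qp_{bl}(x,x)+\delta$ using only $s\geq 1$. Your approach is shorter, dispenses with (QPbl$_4$) entirely, and your remark that the sharper radius $\delta=qp_{bl}(x,x)+\epsilon$ already suffices is a valid improvement on the stated result.
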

\begin{proof}
Suppose that $z\in  B_{{qp}_{bl}} \Big(x;\frac{\epsilon}{2}\Big)$, then
\begin{eqnarray}\label{3.6}
 qp_{bl}(x,z)<qp_{bl}(x,x)+\frac{\epsilon}{2} \thinspace \thinspace \mbox{and} \thinspace \thinspace qp_{bl}(z,x)<qp_{bl}(x,x)+\frac{\epsilon}{2}. 
 \end{eqnarray}
Using \eqref{3.6} we get, 
$\mid D(x,z)-D(x,x)\mid < \epsilon$. Therefore, $ B_{{qp}_{bl}}\Big(x;\frac{\epsilon}{2}\Big) \subseteq B_{D}(x;\epsilon)$. Suppose that $z \in B_D(x;\epsilon)$, then $\mid D(x,z)-D(x,x) \mid < \epsilon$. This gives 
\begin{eqnarray}\label{3.7}
qp_{bl}(x,z)<\epsilon+2qp_{bl}(x,x)-qp_{bl}(z,x),
\end{eqnarray}
and
 \begin{eqnarray}\label{3.8}
 qp_{bl}(z,x)<\epsilon+2qp_{bl}(x,x)-qp_{bl}(x,z).
 \end{eqnarray}
Since $qp_{bl}(x,z) \leq s[qp_{bl}(x,x)+qp_{bl}(x,z)]-qp_{bl}(x,x) \leq s[qp_{bl}(x,x)+qp_{bl}(x,z)]$. 
Using \eqref{3.7} we get, $qp_{bl}(x,z)<s[\epsilon+2qp_{bl}(x,x)]+s[qp_{bl}(x,x)-qp_{bl}(z,x)]\leq s[\epsilon+2qp_{bl}(x,x)]=\delta$. Similarly, by \eqref{3.8} we have $qp_{bl}(z,x)< \delta$. Therefore, $B_D(x;\epsilon) \subseteq B_{{qp}_{bl}}(x;\delta)$ where $\delta= s[\epsilon+2qp_{bl}(x,x)]$.
\end{proof}
\begin{definition}
\emph{\cite{1} Let $(X,D)$ be a b-metric-like space. Then\\
(i) A sequence $\{x_n\} \subseteq X$ \textit{converges} to $x \in X$ if and only if $\lim\limits_{n\rightarrow \infty }D(x_n,x)=D(x,x)$.\\
(ii) A sequence $\{x_n\} \subseteq X$ is called a \textit{Cauchy sequence} if and only if\\ $\lim\limits_{n,m \rightarrow \infty} D(x_n,x_m)$  exists (and is finite).\\
 (iii) A b-metric-like space $(X,D)$ is said to be \textit{complete} if every Cauchy sequence $\{x_n\} \subseteq X$ converges with respect to $\tau_{D}$ to a point $x \in X$ such that 
 $\lim\limits_{n,m \rightarrow \infty} D(x_n,x_m) =D(x,x)=\lim\limits_{n \rightarrow \infty} D(x_n,x)$.}
\end{definition}
\begin{proposition}\label{proposition3.15}
Let $(X,qp_{bl})$ be a quasi-partial b-metric-like space with coefficient $s \geq 1$ and $(X,D)$ be the corresponding b-metric-like space. A sequence  $\{x_n\}$ is  Cauchy in $(X,qp_{bl})$ if and only if $\{x_n\}$ is  Cauchy in $(X,D)$. 
\end{proposition}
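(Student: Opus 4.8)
The plan is to reduce everything to the identity $D(x,y)=qp_{bl}(x,y)+qp_{bl}(y,x)$ and to treat $qp_{bl}(x_n,x_m)$ and $qp_{bl}(x_m,x_n)$ as two non-negative sequences whose sum is $D(x_n,x_m)$. The facts I will use repeatedly are the non-negativity of $qp_{bl}$, which yields $0\le qp_{bl}(x_n,x_m)\le D(x_n,x_m)$ and $0\le qp_{bl}(x_m,x_n)\le D(x_n,x_m)$, together with the two definitions of Cauchy sequence.

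For the direct implication, I would suppose that $\{x_n\}$ is Cauchy in $(X,qp_{bl})$. Then by definition both double limits $\lim_{n,m\to\infty}qp_{bl}(x_n,x_m)$ and $\lim_{n,m\to\infty}qp_{bl}(x_m,x_n)$ exist and are finite, and since the limit of a sum equals the sum of the limits,
\[
\lim_{n,m\to\infty}D(x_n,x_m)=\lim_{n,m\to\infty}qp_{bl}(x_n,x_m)+\lim_{n,m\to\infty}qp_{bl}(x_m,x_n)
\]
exists and is finite, so $\{x_n\}$ is Cauchy in $(X,D)$. I expect this half to be entirely routine.

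For the converse, assume $\lim_{n,m\to\infty}D(x_n,x_m)=S$ exists and is finite. First I would observe that interchanging the names of the two indices shows $\lim_{n,m\to\infty}qp_{bl}(x_n,x_m)$ exists precisely when $\lim_{n,m\to\infty}qp_{bl}(x_m,x_n)$ does, and that the two limits then coincide; hence it suffices to produce the single limit $\lim_{n,m\to\infty}qp_{bl}(x_n,x_m)$. The bound $qp_{bl}(x_n,x_m)\le D(x_n,x_m)$ gives boundedness, so the upper and lower limits $A$ and $a$ of $qp_{bl}(x_n,x_m)$ are finite, and since the reversed array has the same $A$ and $a$, passing to subsequences along which $qp_{bl}(x_n,x_m)$ approaches $A$ and then $a$ and using $qp_{bl}(x_n,x_m)+qp_{bl}(x_m,x_n)\to S$ forces $A+a=S$.

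The step I expect to be the main obstacle is the final one of the converse: deducing $A=a$, equivalently that the antisymmetric part $qp_{bl}(x_n,x_m)-qp_{bl}(x_m,x_n)$ converges (necessarily to $0$), from the relation $A+a=S$. Convergence of the symmetric sum does not by itself yield convergence of each summand, so I would try to control the antisymmetric part by inserting a third index $x_k$ into the quasi-partial b-metric-like inequality (QPbl$_{4}$) and letting $n,m,k\to\infty$, with the aim of squeezing $A-a$ to $0$. I anticipate this to be the genuinely delicate point in the non-symmetric setting; note that in the symmetric case $qp_{bl}(x_n,x_m)=qp_{bl}(x_m,x_n)$ the difficulty disappears and the converse is immediate, so the crux of the argument lies precisely in handling the asymmetry of $qp_{bl}$.
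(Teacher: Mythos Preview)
Your forward direction is correct and matches the paper. The gap is in the converse: you correctly isolate the obstacle --- from $A+a=S$ alone one cannot deduce $A=a$ --- but the tool you reach for, (QPbl$_4$) with a third index, is not what resolves it, and you leave the argument unfinished.

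The missing idea, which the paper uses, is the \emph{diagonal}. Since $\lim_{n,m\to\infty}D(x_n,x_m)=S$ is a genuine double limit, restricting to $m=n$ gives $\lim_{n\to\infty}D(x_n,x_n)=S$, hence $\lim_{n\to\infty}qp_{bl}(x_n,x_n)=S/2$. Now the small self-distance axioms (QPbl$_2$) and (QPbl$_3$) --- not (QPbl$_4$) --- do the work: they give $qp_{bl}(x_n,x_m)\ge qp_{bl}(x_n,x_n)$ and $qp_{bl}(x_m,x_n)\ge qp_{bl}(x_n,x_n)$. Taking $\liminf$ yields $a\ge S/2$, while $qp_{bl}(x_n,x_m)=D(x_n,x_m)-qp_{bl}(x_m,x_n)\le D(x_n,x_m)-qp_{bl}(x_n,x_n)$ gives $A\le S-S/2=S/2$. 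Thus $A=a=S/2$ and the limit exists. In short, the asymmetry of $qp_{bl}$ is tamed not by the triangle-type inequality but by the two ``self-distance is smallest'' axioms, and your outline never invokes them.
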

\begin{proof}
Let $\{x_n\}$ be a Cauchy sequence in $(X,qp_{bl})$. Then $\lim\limits_{n,m \rightarrow \infty} qp_{bl}(x_n,x_m)$ and $\lim\limits_{n,m \rightarrow \infty} qp_{bl}(x_m,x_n)$ exist and are finite. Therefore, there exist $\alpha, \beta \geq 0$ such that for every $\epsilon >0$, there are $N_1,N_2 \in \mathbb{N}$ such that
\[
 \vert qp_{bl}(x_n,x_m)-\alpha \vert  < \frac{\epsilon}{2} \quad \mbox{for all}\thinspace \thinspace n,m \geq N_1,
 \]
and
\[
 \vert qp_{bl}(x_m,x_n)-\beta \vert  < \frac{\epsilon}{2} \quad \mbox{for all}\thinspace \thinspace n,m \geq N_2.
 \]
Let $N=\max\{N_1,N_2\}$. Then for all $n,m \geq N$, we have $\vert D(x_n,x_m)-(\alpha + \beta) \vert  < \epsilon$ for all $n,m \geq N$. Therefore, $\lim\limits_{n,m \rightarrow \infty} D(x_n,x_m) $ exists and is finite. Hence, $\{x_n\}$ is Cauchy in $(X,D)$.

Conversely, let $\{x_n\}$ be a Cauchy sequence in $(X,D)$. Then \\ $\lim\limits_{n,m \rightarrow \infty} D(x_n,x_m)$ exists and is finite. Therefore, there exists $\eta \geq 0$ such that for every $\epsilon >0$, there is $N_3 \in \mathbb{N}$ such that 
\[
\vert D(x_n,x_m) -\eta \vert < \frac{\epsilon}{2} \quad \mbox{for all} \thinspace \thinspace n,m \geq N_3.
\] As $D(x_n,x_n) \leq s[D(x_n,x_m)+D(x_m,x_n)]=2s D(x_n , x_m)$. This implies that $\lim\limits_{n\rightarrow \infty} D(x_n,x_n)$ exists and is finite. Since $D(x_n,x_n)=2 qp_{bl}(x_n,x_n)$. Therefore, $\lim\limits_{n\rightarrow \infty} qp_{bl}(x_n,x_n)$ exists and is finite. Then there exists $\zeta \geq 0$ such that for every $\epsilon >0$, there is $N_4 \in \mathbb{N}$ such that 
\[
\vert qp_{bl}(x_n,x_n) -\zeta \vert < \frac{\epsilon}{2} \quad \mbox{for all} \thinspace \thinspace n\geq N_4.
\] 
Let $N'=\max\{N_3,N_4\}$. Then for all $n,m \geq N'$, we have $\vert qp_{bl}(x_n,x_m) - (\eta-\zeta) \vert = \vert qp_{bl}(x_n,x_m)+qp_{bl}(x_n,x_n)-qp_{bl}(x_n,x_n)-\eta+\zeta \vert \leq \vert qp_{bl}(x_n,x_m)+qp_{bl}(x_m,x_n)-qp_{bl}(x_n,x_n)-\eta+\zeta \vert < \epsilon$. Similarly, $\lim\limits_{n,m\rightarrow \infty} qp_{bl}(x_m,x_n)$ exists and is finite. Hence, $\{x_n\}$ is Cauchy in $(X,qp_{bl})$.
\end{proof}
\begin{theorem}
Let $(X,qp_{bl})$ be a quasi-partial b-metric-like space with coefficient $s \geq 1$ and $(X,D)$ be the corresponding b-metric-like space. Then $(X,D)$ is complete if and only if $(X,qp_{bl})$ is complete.
\end{theorem}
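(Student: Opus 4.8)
The plan is to prove both implications by transporting Cauchy sequences back and forth through Proposition \ref{proposition3.15} and then verifying the respective convergence clauses by hand. Throughout I write $D(x,y)=qp_{bl}(x,y)+qp_{bl}(y,x)$ and use $D(x,x)=2qp_{bl}(x,x)$.

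For the reverse implication (assume $(X,qp_{bl})$ is complete) I would take a sequence $\{x_n\}$ that is Cauchy in $(X,D)$. By Proposition \ref{proposition3.15} it is Cauchy in $(X,qp_{bl})$, so completeness produces a point $x$ with $x_n\to x$ in $\tau_{qp_{bl}}$ and $\lim_{n,m}qp_{bl}(x_n,x_m)=qp_{bl}(x,x)=\lim_{n,m}qp_{bl}(x_m,x_n)$. Adding the two one-sided limits gives $\lim_{n,m}D(x_n,x_m)=2qp_{bl}(x,x)=D(x,x)$, while the convergence clause $\lim qp_{bl}(x_n,x)=qp_{bl}(x,x)=\lim qp_{bl}(x,x_n)$ sums to $\lim_n D(x_n,x)=D(x,x)$. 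Hence $x_n\to x$ in $(X,D)$ with the required limit identity, so $(X,D)$ is complete. This direction is routine bookkeeping.

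The forward implication (assume $(X,D)$ is complete) is the substantive one. Given $\{x_n\}$ Cauchy in $(X,qp_{bl})$, Proposition \ref{proposition3.15} makes it Cauchy in $(X,D)$, so there is a limit $x$ with $\lim_{n,m}D(x_n,x_m)=D(x,x)=\lim_n D(x_n,x)$. The Cauchy hypothesis guarantees that $\alpha=\lim_{n,m}qp_{bl}(x_n,x_m)$ and $\beta=\lim_{n,m}qp_{bl}(x_m,x_n)$ exist, with $\alpha+\beta=D(x,x)=2qp_{bl}(x,x)$. For the convergence clause I would invoke (QPbl$_2$) and (QPbl$_3$) to get $qp_{bl}(x_n,x)\ge qp_{bl}(x,x)$ and $qp_{bl}(x,x_n)\ge qp_{bl}(x,x)$; since these two nonnegative excesses sum to $D(x_n,x)-D(x,x)\to 0$, each must tend to $0$, yielding $x_n\to x$ in $\tau_{qp_{bl}}$.

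The delicate step, and the main obstacle, is upgrading the single identity $\alpha+\beta=2qp_{bl}(x,x)$ to the separate identities $\alpha=\beta=qp_{bl}(x,x)$ demanded by the completeness clause, since $D$ only records the symmetric sum. My plan is to exploit the diagonal: because $\lim_{n,m}D(x_n,x_m)=D(x,x)$ is a genuine double limit, restricting to $n=m$ gives $\lim_n D(x_n,x_n)=D(x,x)$, hence $\lim_n qp_{bl}(x_n,x_n)=qp_{bl}(x,x)$. Then (QPbl$_2$) gives $qp_{bl}(x_n,x_n)\le qp_{bl}(x_n,x_m)$ and (QPbl$_3$) gives $qp_{bl}(x_m,x_m)\le qp_{bl}(x_n,x_m)$ (and symmetrically for the swapped arguments), so passing to the limit yields $qp_{bl}(x,x)\le\alpha$ and $qp_{bl}(x,x)\le\beta$. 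Combined with $\alpha+\beta=2qp_{bl}(x,x)$, these lower bounds squeeze $\alpha=\beta=qp_{bl}(x,x)$, completing the completeness clause and hence the forward direction. I expect the only genuine care to be in justifying the diagonal limit and the limit-passage in the monotonicity inequalities; the remainder is routine.
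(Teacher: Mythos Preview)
Your proposal is correct. The reverse implication matches the paper's argument almost verbatim: transfer the Cauchy sequence via Proposition~\ref{proposition3.15}, find the limit in $(X,qp_{bl})$, and add the two one-sided identities to recover the $D$-identities.

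In the forward implication your argument diverges from the paper's, and in a pleasant way. Both proofs start identically: transfer the Cauchy sequence, obtain the limit $x$ in $(X,D)$, and use the nonnegativity of the excesses $qp_{bl}(x_n,x)-qp_{bl}(x,x)$ and $qp_{bl}(x,x_n)-qp_{bl}(x,x)$ together with the fact that their sum is $D(x_n,x)-D(x,x)\to 0$ to get convergence $x_n\to x$ in $\tau_{qp_{bl}}$. For the final clause $\alpha=\beta=qp_{bl}(x,x)$, however, the paper works from above: it splits into the cases $qp_{bl}(x,x)=0$ and $qp_{bl}(x,x)>0$, in the latter case runs an Archimedean/well-ordering argument to manufacture a suitable $\delta$, and then uses (QPbl$_4$) to show directly that $qp_{bl}(x_n,x_m)<qp_{bl}(x,x)+\epsilon$ for large $n,m$. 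You instead work from below: restrict the double limit to the diagonal to get $\lim_n qp_{bl}(x_n,x_n)=qp_{bl}(x,x)$ (this is legitimate and was in fact established inside the proof of Proposition~\ref{proposition3.15}), invoke (QPbl$_2$)--(QPbl$_3$) to pass the lower bound $qp_{bl}(x_n,x_n)\le qp_{bl}(x_n,x_m)$ to the limit and obtain $qp_{bl}(x,x)\le\alpha$, $qp_{bl}(x,x)\le\beta$, and then squeeze against $\alpha+\beta=2qp_{bl}(x,x)$. Your route avoids the case split and the $\epsilon$-bookkeeping with (QPbl$_4$); the paper's route is more hands-on but never uses the diagonal limit. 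Both are complete.
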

\begin{proof}
Suppose that $(X,D)$ is complete. Let $\{x_n\}$ be a Cauchy sequence in $(X,qp_{bl})$. Then by Proposition \ref{proposition3.15}, $\{x_n\}$ is Cauchy in $(X,D)$. Therefore, there exists $x \in X$ such that $ \lim\limits_{n,m \rightarrow \infty} D(x_n,x_m) =D(x,x)=\lim\limits_{n \rightarrow \infty} D(x_n,x)$.
As $\lim\limits_{n \rightarrow \infty} D(x_n,x)=D(x,x)$. This implies that $\lim\limits_{n \rightarrow \infty} [qp_{bl}(x_n,x)+qp_{bl}(x,x_n)-qp_{bl}(x,x)-qp_{bl}(x,x)]=0$. Since $qp_{bl}(x,x) \leq qp_{bl}(x_n,x)$ and $qp_{bl}(x,x) \leq qp_{bl}(x,x_n)$. Therefore, $\lim\limits_{n \rightarrow \infty} qp_{bl}(x_n,x) = qp_{bl}(x,x)$ and  $\lim\limits_{n \rightarrow \infty} qp_{bl}(x,x_n) = qp_{bl}(x,x).$\\
\textbf{Case-1} If $qp_{bl}(x,x)=0$. Since $qp_{bl}(x_n,x_m) \leq s[ qp_{bl}(x_n,x)+qp_{bl}(x,x_m)]-qp_{bl}(x,x)$. Thus, $\lim\limits_{n,m \rightarrow \infty} qp_{bl}(x_n,x_m) =0$. Similarly, $\lim\limits_{n,m \rightarrow \infty} qp_{bl}(x_m,x_n) =0$. \\
\textbf{Case-2} If $qp_{bl}(x,x)>0$. Consider the set
$$D=\Big\{p \in \mathbb{N} : qp_{bl}(x,x)>\frac{\epsilon}{4s^{p+1}(2s-1)}\Big\}.$$
By the Archimedean property, $D$ is a non-empty set. Then by well-ordering principle, $D$ has a least element say $q$. This implies that $q-1 \notin D$. This gives
\begin{eqnarray}\label{3.9}
qp_{bl}(x,x) \leq \frac{\epsilon}{4s^{q}(2s-1)}.
\end{eqnarray}
Since  $\lim\limits_{n \rightarrow \infty} qp_{bl}(x_n,x)=qp_{bl}(x,x)$. Then for $\delta= \frac{\epsilon}{4s^{q+1}}$, there exists $N_1 \in \mathbb{N}$ such that
$$\mid qp_{bl}(x_n,x)- qp_{bl}(x,x) \mid < \delta \quad \mbox{for all} \thinspace \thinspace n \geq N_1.$$
Also,  $\lim\limits_{m \rightarrow \infty} qp_{bl}(x,x_m)=qp_{bl}(x,x)$. Then for $\delta= \frac{\epsilon}{4s^{q+1}}$, there exists $N_2 \in \mathbb{N}$ such that
$$\mid qp_{bl}(x,x_m)- qp_{bl}(x,x) \mid < \delta \quad \mbox{for all} \thinspace \thinspace n \geq N_2.$$
Let $N=$ max $\{N_1,N_2\}$. Then for all $n,m \geq N$, we have $qp_{bl}(x_n,x_m)  \leq s[qp_{bl}(x_n,x)+qp_{bl}(x,x_m)]-qp_{bl}(x,x)< 2s\delta + qp_{bl}(x,x) +(2s-1) qp_{bl}(x,x)$. Using \eqref{3.9} we get, $qp_{bl}(x_n,x_m)\leq \frac{2s \epsilon}{4s^{q+1}}+qp_{bl}(x,x)+\frac{ \epsilon}{4s^{q}}< \epsilon+ qp_{bl}(x,x)$. This implies  that $\lim\limits_{n,m \rightarrow \infty} qp_{bl}(x_n,x_m) \leq qp_{bl}(x,x)$. A similar argument shows that   $\lim\limits_{n,m \rightarrow \infty} qp_{bl}(x_m,x_n) \leq qp_{bl}(x,x)$. Since $ D(x,x)=\lim\limits_{n,m \rightarrow \infty} D(x_n,x_m)$ therefore, $\lim\limits_{n,m \rightarrow \infty} qp_{bl}(x_n,x_m) = qp_{bl}(x,x)= \lim\limits_{n,m \rightarrow \infty} qp_{bl}(x_m,x_n) $. Hence, $(X, qp_{bl})$ is complete.

Conversely, suppose that $(X, qp_{bl})$ is complete. Let $\{y_n\}$ be a Cauchy sequence in $(X,D)$. Then by Proposition \ref{proposition3.15}, $\{y_n\}$ is Cauchy in $(X, qp_{bl})$. Therefore, there exists $y \in X$ such that $y_n \to y$ and $ \lim\limits_{n,m \rightarrow \infty} qp_{bl}(y_n,y_m) =qp_{bl}(y,y)=\lim\limits_{n,m \rightarrow \infty} qp_{bl}(y_m,y_n)$. Then for $\epsilon >0$, there exist $N_3$, $N_4 \in \mathbb{N}$ such that 
\[
\vert qp_{bl}(y_n,y_m) -qp_{bl}(y,y)\vert < \frac{\epsilon}{2} \quad \mbox{for all} \thinspace \thinspace n,m \geq N_3,
\]
and  
\[
\vert qp_{bl}(y_m,y_n) -qp_{bl}(y,y)\vert < \frac{\epsilon}{2} \quad \mbox{for all} \thinspace \thinspace n,m \geq N_4.
\]
Let $N'=\max \{N_3,N_4\}$. Then for all $n,m \geq N'$, we have $\vert D(y_n,y_m) -D(y,y)\vert < \epsilon$.
Since $\lim\limits_{n \rightarrow \infty} qp_{bl}(y_n,y)=qp_{bl}(y,y)=\lim\limits_{n \rightarrow \infty} qp_{bl}(y,y_n)$. Then for $\epsilon >0$, there exist $N_5$, $N_6 \in \mathbb{N}$ such that 
\[
\vert qp_{bl}(y_n,y) -qp_{bl}(y,y)\vert < \frac{\epsilon}{2} \quad \mbox{for all} \thinspace \thinspace n \geq N_5,
\]
and  
\[
\vert qp_{bl}(y,y_n) -qp_{bl}(y,y)\vert < \frac{\epsilon}{2} \quad \mbox{for all} \thinspace \thinspace n \geq N_6.
\]
Let $N''=\max \{N_5,N_6\}$. Then for all $n \geq N''$, we have $\vert D(y_n,y) -D(y,y)\vert= \vert qp_{bl}(y_n,y)+ qp_{bl}(y,y_n)- 2 qp_{bl}(y,y) \vert < \epsilon$. Therefore, $ \lim\limits_{n,m \rightarrow \infty} D(y_n,y_m) = D(y,y)=\lim\limits_{n \rightarrow \infty} (y_n,y)$. Hence, $(X,D)$ is complete. 
\end{proof}
\section{Fixed Point Results}
Many authors have discussed fixed point theorems on various generalized metric spaces (see \cite{1,2,3,6,8,10,12,13,14}). In this section we obtain some fixed point results in quasi-partial b-metric-like spaces. 
\begin{definition}
\emph{Let $(X,qp_{bl})$ be a quasi-partial b-metric-like space. Then \\
(i) A sequence $\{x_n\} \subseteq X$ is called a \textit{0-Cauchy} sequence if and only if $$\lim\limits_{n,m \rightarrow \infty} qp_{bl}(x_n,x_m) =0=\lim\limits_{n,m \rightarrow \infty} qp_{bl}(x_m,x_n).$$
(ii) A quasi-partial b-metric-like space $(X,qp_{bl})$ is said to be \textit{0-complete} if and only if for every 0-Cauchy sequence $\{x_n\} \subseteq X$, there exists $x \in X$ such that $\lim\limits_{n \rightarrow \infty} qp_{bl}(x_n,x)=\lim\limits_{n \rightarrow \infty} qp_{bl}(x,x_n) =qp_{bl}(x,x)=0=\lim\limits_{n,m \rightarrow \infty} qp_{bl}(x_n,x_m)=\\ \lim\limits_{n,m \rightarrow \infty} qp_{bl}(x_m,x_n).$}
 \end{definition}
 It can be observed that every 0-Cauchy sequence is a Cauchy sequence in a quasi-partial b-metric-like space. Therefore,  every complete quasi-partial b-metric-like space is 0-complete quasi-partial b-metric-like space. 
 \begin{theorem}
Let $(X,qp_{bl})$ be a 0-complete quasi-partial b-metric-like space with coefficient $s \geq 1$. Let $T: X \rightarrow X$ be a map such that
$$qp_{bl}(Tx,Ty) \leq \phi(qp_{bl}(x,y)) \quad \emph{for all} \thinspace \thinspace x,y \in X,$$
where $\phi : [0, \infty) \rightarrow [0,\infty)$ is a continuous map such that $\phi (t)=0$ if and only if $t=0$ and $\phi (t) < t$ for all $t>0$. If $\sum\limits_{n=1}^{\infty} s^n \phi ^n (t)$ converges for all $t>0$ where $\phi ^n$ is the nth iterate of $\phi$. Then $T$ has a unique fixed point.
\end{theorem}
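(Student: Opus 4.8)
The plan is to run a Picard iteration from an arbitrary seed and show the orbit is $0$-Cauchy, then invoke $0$-completeness. Fix $x_0 \in X$ and set $x_{n+1} = Tx_n$. Writing $t_0 = qp_{bl}(x_0,x_1)$, the contraction condition applied along the orbit gives $qp_{bl}(x_{n},x_{n+1}) = qp_{bl}(Tx_{n-1},Tx_n) \leq \phi(qp_{bl}(x_{n-1},x_n))$, and iterating yields $qp_{bl}(x_n,x_{n+1}) \leq \phi^n(t_0)$; the analogous bound with $x_1,x_0$ in place of $x_0,x_1$ controls $qp_{bl}(x_{n+1},x_n)$. Since $\phi(t) < t$ for $t > 0$, the sequence $\phi^n(t_0)$ is nonincreasing and its limit $L$ satisfies $L = \phi(L)$ by continuity of $\phi$, forcing $L = 0$; alternatively this is immediate from convergence of $\sum s^n \phi^n(t_0)$ together with $s \geq 1$.

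Next I would establish that $\{x_n\}$ is $0$-Cauchy. Applying (QPbl$_4$) repeatedly and discarding the nonpositive $-qp_{bl}(\cdot,\cdot)$ terms gives, for $1 \leq n < m$, an estimate of the form $qp_{bl}(x_n,x_m) \leq \sum_{k=n}^{m-1} s^{k-n+1}\, qp_{bl}(x_k,x_{k+1}) \leq \sum_{k=n}^{m-1} s^{k}\phi^{k}(t_0)$, where the last inequality uses $s \geq 1$. Because $\sum_{k=1}^{\infty} s^k \phi^k(t_0)$ converges, its tail tends to $0$, so $qp_{bl}(x_n,x_m) \to 0$ as $n,m \to \infty$; the same argument gives $qp_{bl}(x_m,x_n) \to 0$. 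Thus $\{x_n\}$ is $0$-Cauchy, and by $0$-completeness there is $x \in X$ with $\lim_n qp_{bl}(x_n,x) = \lim_n qp_{bl}(x,x_n) = qp_{bl}(x,x) = 0$.

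To see that $x$ is a fixed point, I would use (QPbl$_4$) once more: $qp_{bl}(x,Tx) \leq s\,qp_{bl}(x,x_{n+1}) + s\,qp_{bl}(Tx_n,Tx) \leq s\,qp_{bl}(x,x_{n+1}) + s\,\phi(qp_{bl}(x_n,x))$. Letting $n \to \infty$, the first term vanishes and, by continuity of $\phi$ together with $\phi(0)=0$ and $qp_{bl}(x_n,x)\to 0$, so does the second; hence $qp_{bl}(x,Tx)=0$ and (QPbl$_1$) gives $Tx = x$. For uniqueness, if $x$ and $y$ are both fixed points then $qp_{bl}(x,y) = qp_{bl}(Tx,Ty) \leq \phi(qp_{bl}(x,y))$, which is impossible when $qp_{bl}(x,y) > 0$ since then $\phi(qp_{bl}(x,y)) < qp_{bl}(x,y)$; therefore $qp_{bl}(x,y) = 0$ and $x = y$ by (QPbl$_1$).

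The main obstacle I anticipate is the $0$-Cauchy estimate: the bookkeeping of the powers of $s$ produced by iterating the weighted triangle inequality (QPbl$_4$) must be arranged so that the bound collapses onto a tail of the convergent series $\sum s^n \phi^n(t_0)$, which is exactly what the summability hypothesis is designed to supply. A secondary point to watch is the passage from $qp_{bl}(x_n,x_{n+1}) \leq \phi(qp_{bl}(x_{n-1},x_n))$ to the closed bound $\phi^n(t_0)$: this comparison is transparent if $\phi$ is nondecreasing, and otherwise one argues directly that the orbit distances decrease to $0$ and controls the Cauchy estimate through the summable majorant. Everything after the limit point is produced is routine, relying only on continuity of $\phi$, the axiom $\phi(t)=0 \Leftrightarrow t=0$, and (QPbl$_1$).
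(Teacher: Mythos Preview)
Your proposal is correct and follows essentially the same route as the paper: Picard iteration, the bound $qp_{bl}(x_n,x_{n+1})\le \phi^n(t_0)$, a telescoping use of (QPbl$_4$) to get $qp_{bl}(x_n,x_m)\le \sum_{k=n}^{m-1} s^k\phi^k(t_0)$, $0$-completeness to produce the limit, and the same fixed-point and uniqueness arguments. You in fact add more care than the paper by flagging the monotonicity issue in passing from $qp_{bl}(x_n,x_{n+1})\le \phi(qp_{bl}(x_{n-1},x_n))$ to $\phi^n(t_0)$; the paper simply asserts this step.
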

\begin{proof}
Suppose that $x_0 \in X$. We obtain  
\[
qp_{bl}(T^n x_0, T^{n+1} x_0) \leq \phi ^n (qp_{bl}(x_0, T x_0)) \quad \mbox{for all} \thinspace \thinspace  n>1,
\]
and
\[
qp_{bl}(T^{n+1} x_0, T^{n} x_0) \leq \phi ^n (qp_{bl}(T x_0,  x_0)) \quad \mbox{for all} \thinspace \thinspace  n>1.
\]
If $qp_{bl}(T x_0,x_0)=0$ or $qp_{bl}(x_0,T x_0)=0$. Then $T$ has a fixed point. Suppose that $qp_{bl}(x_0,T x_0) >0 $ and $qp_{bl}(Tx_0,x_0) >0$. For $m>n$, we have
\begin{align*}
qp_{bl}(T^n x_0, T^m x_0)  &\leq  s[qp_{bl}(T^n x_0, T^{n+1} x_0) + qp_{bl}(T^{n+1} x_0, T^m x_0)]\\
&\quad -qp_{bl}(T^{n+1} x_0, T^ {n+1} x_0)\\
& \leq  s qp_{bl}(T^n x_0, T^{n+1} x_0)+s qp_{bl}(T^{n+1} x_0, T^m x_0)\\
& \leq  sqp_{bl}(T^n x_0, T^{n+1} x_0)+s [s\{qp_{bl}(T^{n+1}x_0,T^{n+2} x_0)\\
& \quad + qp_{bl}(T^{n+2} x_0, T^m x_0)\} -qp_{bl}(T^{n+2}x_0,T^{n+2}x_0)]\\
 & \leq  s qp_{bl}(T^n x_0, T^{n+1} x_0) +s^2 qp_{bl}(T^{n+1} x_0, T^{n+2} x_0)\\
 & \quad+ s^3 qp_{bl}(T^{n+2} x_0, T^{n+3} x_0)\\
 & \quad +  \ldots +s^{m-n-1} qp_{bl}(T^{m-1} x_0,T^m x_0)\\
 & \leq  s \phi^n (qp_{bl}(x_0, T x_0))+ s^2 \phi^{n+1} (qp_{bl}(x_0, T x_0))\\
 & \quad +s^3 \phi^{n+2} (qp_{bl}(x_0, T x_0))\\
 & \quad + \ldots + s^{m-n-1} \phi^{m-1} (qp_{bl}(x_0, T x_0))\\
 & \leq  \sum\limits_{k=n}^{m-1} s^k \phi ^k (qp_{bl}(x_0,T x_0)).
 \end{align*}
Since $\sum\limits_{n=1}^{\infty} s^n \phi ^n (t)$ converges for all $t>0$. Then $\lim\limits_{n,m \rightarrow \infty} qp_{bl}(T^n x_0, T^m x_0)=0$. Similarly, $ qp_{bl}(T^m x_0, T^n x_0)=0$. Thus, $\{T^n x_0\}$ is a 0-Cauchy sequence. Then there exists $z \in X$ such that
$ \lim\limits_{n \rightarrow \infty} qp_{bl}(T^n x_0,z)= \lim\limits_{n \rightarrow \infty} qp_{bl}(z, T^n x_0)= qp_{bl}(z,z)=0=\lim\limits_{n,m \rightarrow \infty} qp_{bl}(T^n x_0, T^m x_0)=\lim\limits_{n,m \rightarrow \infty} qp_{bl}(T^m x_0, T^n x_0)$.
Since $qp_{bl}(z,Tz)  \leq s[qp_{bl}(z,T^n x_0) + qp_{bl}(T^n x_0,Tz)]-qp_{bl}(T^n x_0, T^n x_0)$. This gives $qp_{bl}(z,Tz) \leq s qp_{bl}(z, T^n x_0)+s \phi(qp_{bl}(T^{n-1} x_0, z))$. Letting $n \rightarrow \infty $ we get, $z=Tz$. Let $z$ and $w$ be two fixed points of $T$. Then $qp_{bl}(z,w)= qp_{bl}(Tz,Tw)$ $\leq \phi (qp_{bl}(z,w)) < qp_{bl}(z,w)$ a contradiction. Thus, $z=w$.
\end{proof}
\begin{corollary}
Let $(X, qp_{bl})$ be a 0-complete quasi-partial b-metric-like space with coefficient $s \geq 1$. Let $T: X \rightarrow X$ be a mapping such that 
$$ qp_{bl}(Tx, Ty) \leq \lambda qp_{bl} (x,y) \quad \emph{ for all} \thinspace \thinspace x,y \in X,$$
where $0 \leq \lambda < \frac{1}{s}$. Then $T$ has a unique fixed point in $X$. Moreover, for any $x_0 \in X$, the iterative sequence $\{T^n x_0\}$ converges to the fixed point.
\end{corollary}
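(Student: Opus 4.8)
The plan is to derive this corollary directly from the preceding theorem by specializing the control function $\phi$ to the linear map $\phi(t) = \lambda t$. With this choice the contractive hypothesis $qp_{bl}(Tx, Ty) \le \lambda\, qp_{bl}(x,y)$ becomes exactly $qp_{bl}(Tx, Ty) \le \phi(qp_{bl}(x,y))$, so it remains only to check that $\phi$ satisfies the four structural requirements imposed on the control function in the theorem, after which the conclusion (existence and uniqueness of the fixed point) follows immediately.

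First I would verify the hypotheses for $\lambda > 0$. Continuity of $\phi(t) = \lambda t$ on $[0,\infty)$ is clear. Since $\lambda > 0$, the equality $\phi(t) = \lambda t = 0$ holds precisely when $t = 0$, giving the requirement that $\phi(t) = 0$ if and only if $t = 0$. Because $\lambda < \frac{1}{s} \le 1$, for every $t > 0$ we have $\phi(t) = \lambda t < t$. Finally, the $n$th iterate is $\phi^n(t) = \lambda^n t$, so
\[
\sum_{n=1}^{\infty} s^n \phi^n(t) = t \sum_{n=1}^{\infty} (s\lambda)^n,
\]
and this geometric series converges for every $t > 0$ precisely because $0 \le s\lambda < 1$, which is exactly the content of the assumption $\lambda < \frac{1}{s}$. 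With all four conditions confirmed, the theorem yields a unique fixed point of $T$.

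The edge case $\lambda = 0$ must be handled separately, since then $\phi \equiv 0$ fails the requirement that $\phi(t) = 0$ only at $t = 0$. But this case is trivial: the contractive inequality forces $qp_{bl}(Tx, Ty) = 0$ for all $x, y$, whence $Tx = Ty$ by (QPbl$_{1}$), so $T$ is constant and its single image point is the unique fixed point. For the ``moreover'' claim no additional work is needed, since the proof of the theorem constructs the fixed point $z$ as the limit of the Picard iterates $\{T^n x_0\}$ starting from an arbitrary $x_0 \in X$, so the convergence of $\{T^n x_0\}$ to the fixed point is already established there. I do not anticipate a genuine obstacle; the only point demanding care is recording that the single inequality $\lambda < \frac{1}{s}$ simultaneously secures both the strict-contraction condition $\phi(t) < t$ and the convergence of the weighted series, as these are exactly the two places where the bound on $\lambda$ is used.
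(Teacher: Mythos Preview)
Your proposal is correct and follows exactly the intended route: the paper states this as an immediate corollary of the preceding theorem with no separate proof, and specializing $\phi(t)=\lambda t$ is precisely the reduction that makes it a corollary. Your separate treatment of the degenerate case $\lambda=0$ and your remark that the Picard iterates already appear in the proof of the theorem are both appropriate additions.
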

\begin{definition}
\emph{Let $T$ be a self mapping on $X$, then $O(x,T)=\{x,Tx,T^2x, \ldots \}$ is called a orbit of x.}
\end{definition}
\begin{theorem}\label{theorem5.5}
Let $(X,qp_{bl})$ be a quasi-partial b-metric-like space and let $T: X \rightarrow X$. Then the following hold.
\begin{itemize}
\item[(i)] There exists $\phi : X \rightarrow \mathcal{R}^+$ such that 
$$ qp_{bl} (x,Tx) \leq \phi (x)- \phi (Tx) \quad \emph{for all} \thinspace \thinspace x \in X, $$
if and only if  $\sum\limits_{n=0}^{\infty} qp_{bl}(T^n x, T^{n+1} x)$ converges for all $x \in X$.
\item[(ii)] There exists $\phi : X \rightarrow \mathcal{R}^+$ such that 
$$ qp_{bl} (x,Tx) \leq \phi (x)- \phi (Tx) \quad \emph{for all} \thinspace \thinspace x \in O(x), $$
if and only if  $\sum\limits_{n=0}^{\infty} qp_{bl}(T^n x, T^{n+1} x)$ converges for all $x \in O(x)$.
\end{itemize}
\end{theorem}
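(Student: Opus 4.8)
The plan is to exploit the telescoping structure hidden in both sides of the claimed equivalence: the difference $\phi(x)-\phi(Tx)$ is designed precisely so that summing it along the orbit collapses, and this single observation drives every implication in both (i) and (ii). Throughout I use that $\mathcal{R}^+=[0,\infty)$, so any value of $\phi$ is nonnegative, and that $qp_{bl}\ge 0$.

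For the forward implication of (i), I would assume a $\phi:X\to\mathcal{R}^+$ satisfying $qp_{bl}(x,Tx)\le\phi(x)-\phi(Tx)$ for all $x\in X$ and substitute $T^n x$ in place of $x$, obtaining $qp_{bl}(T^n x,T^{n+1}x)\le\phi(T^n x)-\phi(T^{n+1}x)$. Summing from $n=0$ to $N$ telescopes the right-hand side to $\phi(x)-\phi(T^{N+1}x)\le\phi(x)$, where the last inequality uses $\phi\ge 0$. Since each summand is nonnegative, the partial sums form a nondecreasing sequence bounded above by $\phi(x)$, so $\sum_{n=0}^{\infty}qp_{bl}(T^n x,T^{n+1}x)$ converges.

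For the converse of (i), assuming convergence for every $x\in X$, I would define $\phi:X\to\mathcal{R}^+$ by $\phi(x)=\sum_{n=0}^{\infty}qp_{bl}(T^n x,T^{n+1}x)$, which is finite and nonnegative, hence well defined. The key identity is that $\phi(Tx)$ is exactly the tail of the series defining $\phi(x)$: reindexing $qp_{bl}(T^n(Tx),T^{n+1}(Tx))=qp_{bl}(T^{n+1}x,T^{n+2}x)$ gives $\phi(Tx)=\sum_{m=1}^{\infty}qp_{bl}(T^m x,T^{m+1}x)$. Subtracting yields $\phi(x)-\phi(Tx)=qp_{bl}(x,Tx)$, so the desired inequality holds, in fact with equality.

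Part (ii) follows by the identical argument with all quantifiers restricted to $y\in O(x)$. The only thing to verify is that the construction stays inside the orbit: if $y\in O(x)$ then $Ty\in O(x)$, and the series defining $\phi(y)$ is a tail of the series for $x$, so convergence at the single basepoint $x$ already makes $\phi$ finite on all of $O(x)$ and keeps the telescoping identity available there. I do not expect a genuine obstacle; the one point requiring care is the use of $\phi\ge 0$ to discard $\phi(T^{N+1}x)$ in the forward direction, since without the codomain $\mathcal{R}^+$ the partial-sum bound would fail.
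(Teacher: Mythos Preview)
Your proposal is correct: the telescoping argument you outline is the standard proof of this Caristi-type characterization, and both directions are handled properly, including the use of $\phi\ge 0$ to bound the partial sums and the tail-reindexing to recover $\phi(x)-\phi(Tx)=qp_{bl}(x,Tx)$ in the converse. The paper itself does not give a proof at all but simply writes ``The proof is similar to the case of quasi-partial b-metric space \cite{8},'' so your argument supplies exactly the details the authors omit by reference.
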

The proof is similar to the case of quasi-partial b-metric space \cite{8}.
\begin{example}
\emph{Let $X=[0,1]$. Define $qp_{bl}: X \times X \rightarrow [0,\infty)$ as\\
 $qp_{bl}(x,y)=\left\{\begin{array}{cc}
(x+y)^2,& \mbox{if}\thinspace  x \neq y,\\
0, &\mbox{if} \thinspace x=y.
\end{array}
\right. $\\
 Then $(X,qp_{bl})$ is  a quasi-partial b-metric-like space with $s=2$. Define $T: X \rightarrow X$ as $T x = \frac{x}{2}$, then the series $\sum\limits_{n=0}^{\infty} qp_{bl}(T^n x, T^{n+1} x)$ is convergent. We have $\sum\limits_{n=0}^{\infty} qp_{bl}(T^n x, T^{n+1} x) =\sum\limits_{n=0}^{\infty} qp_{bl} \Big( \frac{x}{2^n},\frac{x}{2^{n+1}}\Big)
  = \sum\limits_{n=0}^{\infty} \Big( \frac{x}{2^n}+\frac{x}{2^{n+1}}\Big)=3 x^2$.  Then conditions of Theorem \ref{theorem5.5} are satisfied for $\phi (x)= 3 x^2$.} 
\end{example}
\begin{proposition}
Let $(X, qp_{bl})$ be a quasi-partial b-metric-like space with coefficient $s \geq 1$ and let $\{x_n \}$ be a sequence in $X$ such that $\lim\limits_{n \rightarrow \infty} qp_{bl}(x_n,x)=0= \lim\limits_{n \rightarrow \infty} qp_{bl}(x,x_n)$. Then\\
(i) x is unique.\\
(ii) $\frac{1}{s} qp_{bl}(x,y) \leq \lim\limits_{n \rightarrow \infty} qp_{bl}(x_n,y) \leq s qp_{bl}(x,y)$ for all $y \in X$.
\end{proposition}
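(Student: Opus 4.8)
The plan is to distill from the hypotheses the single auxiliary fact that $qp_{bl}(x,x)=0$ and then to apply the $b$-triangle inequality (QPbl$_4$) in the two available directions, discarding the nonnegative self-distance terms whenever convenient. To obtain $qp_{bl}(x,x)=0$, I would use (QPbl$_3$), which gives $qp_{bl}(x,x)\le qp_{bl}(x_n,x)$ for every $n$; since the right-hand side tends to $0$ and $qp_{bl}$ is nonnegative, letting $n\to\infty$ forces $qp_{bl}(x,x)=0$ (the same follows from (QPbl$_2$) together with $qp_{bl}(x,x_n)\to0$).

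For (i), suppose some $x'$ also satisfies $qp_{bl}(x_n,x')\to0$ and $qp_{bl}(x',x_n)\to0$. Applying (QPbl$_4$) to the triple $(x,x_n,x')$ yields
\[
qp_{bl}(x,x')\le s\big[qp_{bl}(x,x_n)+qp_{bl}(x_n,x')\big]-qp_{bl}(x_n,x_n)\le s\big[qp_{bl}(x,x_n)+qp_{bl}(x_n,x')\big].
\]
Both terms inside the bracket vanish as $n\to\infty$, so $qp_{bl}(x,x')=0$, and (QPbl$_1$) gives $x=x'$. This completes the uniqueness argument.

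For (ii), I would establish the two inequalities separately by choosing the pivot point of (QPbl$_4$) appropriately. For the upper bound, apply (QPbl$_4$) to the triple $(x_n,x,y)$, pivoting through $x$, to get $qp_{bl}(x_n,y)\le s[qp_{bl}(x_n,x)+qp_{bl}(x,y)]-qp_{bl}(x,x)$; dropping the last (nonnegative, in fact zero) term and letting $n\to\infty$ gives $\lim_n qp_{bl}(x_n,y)\le s\,qp_{bl}(x,y)$. For the lower bound, apply (QPbl$_4$) to the triple $(x,x_n,y)$, pivoting through $x_n$, to get $qp_{bl}(x,y)\le s[qp_{bl}(x,x_n)+qp_{bl}(x_n,y)]-qp_{bl}(x_n,x_n)\le s[qp_{bl}(x,x_n)+qp_{bl}(x_n,y)]$; rearranging and letting $n\to\infty$, the term $qp_{bl}(x,x_n)$ disappears and we are left with $qp_{bl}(x,y)\le s\lim_n qp_{bl}(x_n,y)$, that is, $\tfrac1s qp_{bl}(x,y)\le\lim_n qp_{bl}(x_n,y)$.

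I do not expect a genuine obstacle here; the only points needing care are the direction in which (QPbl$_4$) is applied (pivoting through $x$ for the upper bound and through $x_n$ for the lower bound) and the observation that the self-distance terms may be discarded precisely because they are nonnegative. If one is uneasy about presupposing the existence of $\lim_n qp_{bl}(x_n,y)$, the two displayed estimates are really statements about $\limsup$ and $\liminf$ respectively, and together they confine the quantity between $\tfrac1s qp_{bl}(x,y)$ and $s\,qp_{bl}(x,y)$.
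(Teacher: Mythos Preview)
Your proof is correct and follows essentially the same route as the paper: for (i) you pivot (QPbl$_4$) through $x_n$ between $x$ and the competing limit, and for (ii) you pivot through $x$ for the upper estimate and through $x_n$ for the lower estimate, exactly as the paper does. Your preliminary observation that $qp_{bl}(x,x)=0$ and your remark that the two estimates really control $\limsup$ and $\liminf$ are welcome clarifications not made explicit in the paper.
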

\begin{proof}
(i) Suppose that there exists $z \in X$ such that $\lim\limits_{n \rightarrow \infty} qp_{bl}(x_n,z)=0= \lim\limits_{n \rightarrow \infty} qp_{bl}(z,x_n)$. Since $qp_{bl}(x,z) \leq s[qp_{bl}(x,x_n)+ qp_{bl}(x_n,z)]-qp_{bl}(x_n,x_n)$. Therefore, $x=z$.\\
(ii) Since $\frac{1}{s} qp_{bl}(x,y) \leq \frac{1}{s} [s\{qp_{bl}(x,x_n)+ qp_{bl}(x_n,y) \}-qp_{bl}(x_n,x_n)]$. Then $\frac{1}{s} qp_{bl}(x,y) \leq \lim\limits_{n \rightarrow \infty} qp_{bl}(x_n,y)$. Also, $qp_{bl}(x_n,y) \leq s[ qp_{bl}(x_n,x)+qp_{bl}(x,y)]-qp_{bl}(x,x)$. This implies that $\lim\limits_{n \rightarrow \infty} qp_{bl}(x_n,y) \leq s  qp_{bl}(x,y)$.
\end{proof}
\begin{remark}
\emph{Let $(X,qp_{bl})$ be a quasi-partial b-metric-like space with coefficient $s \geq 1$ and if $x \neq y$, then $qp_{bl}(x,y)>0$ and $qp_{bl}(y,x) >0$.}
\end{remark}
\begin{theorem}\label{theorem5.9}
Let $(X,qp_{bl})$ be 0-complete quasi-partial b-metric-like space with coefficient $s \geq 1$ and let $T: X \rightarrow X$ be a map satisfying
\[ \phi ( qp_{bl}(Tx,Ty)) \leq \frac{\phi(qp_{bl}(x,y))}{s}-\psi(qp_{bl}(x,y)) \quad \mbox{for all} \thinspace \thinspace x,y \in X, \]
where $\phi, \psi : [0,\infty) \rightarrow [0, \infty)$ are continuous, monotone non-decreasing functions with $\phi (t)=0=\psi (t)$ if and only if $t=0$. Also $\phi$ is linear and $\phi( \psi (t)) \leq \psi (t)$ for $t>0$. Then $T$ has a unique fixed point.
\end{theorem}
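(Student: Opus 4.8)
The plan is to use Picard iteration. Fix $x_0 \in X$ and set $x_n = T^n x_0$. The first move is to exploit the linearity of $\phi$: since $\phi$ is non-decreasing with $\phi(t)=0 \iff t=0$, linearity forces $\phi(t)=ct$ for some $c>0$, and then the hypothesis can be divided through by $c$ to yield the pointwise estimate
\[
qp_{bl}(Tx,Ty) \le \frac{1}{s}\,qp_{bl}(x,y) - \frac{1}{c}\,\psi(qp_{bl}(x,y)) \le \frac{1}{s}\,qp_{bl}(x,y)
\]
for all $x,y \in X$. Applying this to consecutive iterates gives $d_n := qp_{bl}(x_n,x_{n+1}) \le \frac{1}{s} d_{n-1} \le d_{n-1}$, so $\{d_n\}$ is non-increasing and converges to some $L \ge 0$; feeding $L$ back into the full inequality and using continuity of $\phi$ and $\psi$ yields $\psi(L)\le cL(\frac{1}{s}-1)\le 0$, hence $\psi(L)=0$ and $L=0$. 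The same argument applied to the reversed pairs shows $e_n := qp_{bl}(x_{n+1},x_n)\to 0$.

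The crux is to prove that $\{x_n\}$ is $0$-Cauchy; this is the step I expect to be the main obstacle, because the effective contraction ratio $1/s$ is exactly borderline, so the naive telescoping estimate (which produces coefficients of the form $s^{k}d_k$) fails to converge. I would argue by contradiction: if $\{x_n\}$ is not $0$-Cauchy in the first variable, there are $\epsilon>0$ and indices $n_k<m_k$, with $m_k$ chosen minimal, such that $qp_{bl}(x_{n_k},x_{m_k})\ge \epsilon$ while $qp_{bl}(x_{n_k},x_{m_k-1})<\epsilon$. The key trick is to apply the contraction \emph{first} and only then the triangle inequality (QPbl$_4$), so that the factor $\frac{1}{s}$ cancels the $s$: from $qp_{bl}(x_{n_k},x_{m_k})\le \frac{1}{s} qp_{bl}(x_{n_k-1},x_{m_k-1})$ together with (QPbl$_4$) and $qp_{bl}(x_{n_k},x_{m_k-1})<\epsilon$ one obtains $\epsilon \le qp_{bl}(x_{n_k},x_{m_k}) \le \epsilon + d_{n_k-1}$ and, dually, $s\epsilon \le qp_{bl}(x_{n_k-1},x_{m_k-1}) \le s\epsilon + s\,d_{n_k-1}$. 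Since $d_{n_k-1}\to 0$, these squeeze to $qp_{bl}(x_{n_k},x_{m_k})\to \epsilon$ and $qp_{bl}(x_{n_k-1},x_{m_k-1})\to s\epsilon$. Substituting into the contraction applied to the pair $(x_{n_k-1},x_{m_k-1})$ and letting $k\to\infty$, continuity and homogeneity of $\phi$ give $\phi(\epsilon)\le \frac{1}{s}\phi(s\epsilon)-\psi(s\epsilon)=\phi(\epsilon)-\psi(s\epsilon)$, whence $\psi(s\epsilon)=0$ and $\epsilon=0$, a contradiction. Running the identical argument with the arguments transposed settles the second double limit, so $\{x_n\}$ is $0$-Cauchy.

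By $0$-completeness there is $z\in X$ with $qp_{bl}(x_n,z)\to 0$, $qp_{bl}(z,x_n)\to 0$ and $qp_{bl}(z,z)=0$. To see $Tz=z$, I would estimate, via (QPbl$_4$) and the pointwise estimate, $qp_{bl}(z,Tz)\le s\,qp_{bl}(z,x_{n+1})+s\,qp_{bl}(Tx_n,Tz)\le s\,qp_{bl}(z,x_{n+1})+qp_{bl}(x_n,z)$; letting $n\to\infty$ gives $qp_{bl}(z,Tz)=0$, so $z=Tz$ by (QPbl$_1$). For uniqueness, if $Tz=z$ and $Tw=w$ with $z\ne w$, then $qp_{bl}(z,w)>0$, and the contraction gives $\phi(qp_{bl}(z,w))\le \frac{1}{s}\phi(qp_{bl}(z,w))-\psi(qp_{bl}(z,w))$; since $s\ge 1$ the quantity $\phi(qp_{bl}(z,w))\bigl(1-\frac{1}{s}\bigr)\ge 0$ is bounded above by $-\psi(qp_{bl}(z,w))\le 0$, forcing $\psi(qp_{bl}(z,w))=0$ and hence $qp_{bl}(z,w)=0$, a contradiction. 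Therefore the fixed point is unique.
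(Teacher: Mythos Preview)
Your argument is correct, and it takes a genuinely different route from the paper for the $0$-Cauchy step. The paper fixes an anchor index $n_0$ and proves by an inductive ``ball containment'' claim that $qp_{bl}(x,x_{n_0})\le\epsilon\Rightarrow qp_{bl}(Tx,x_{n_0})\le\epsilon$, splitting into the ranges $qp_{bl}(x,x_{n_0})\le\epsilon/(2s)$ and $\epsilon/(2s)\le qp_{bl}(x,x_{n_0})\le\epsilon$; in the second range it relies essentially on the extra hypothesis $\phi(\psi(t))\le\psi(t)$ to absorb the term $s\,\phi(qp_{bl}(x_{n_0+1},x_{n_0}))$. You instead run the standard subsequence contradiction, and your key observation --- apply the contraction \emph{before} the triangle inequality so that the $1/s$ cancels the $s$ --- is exactly what makes the limits $qp_{bl}(x_{n_k},x_{m_k})\to\epsilon$ and $qp_{bl}(x_{n_k-1},x_{m_k-1})\to s\epsilon$ fall out cleanly, after which linearity ($\phi(s\epsilon)=s\phi(\epsilon)$) and continuity finish the job.

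Two remarks. First, your reduction $\phi(t)=ct$ is legitimate and clarifying; the paper keeps $\phi$ abstract but uses linearity in exactly the same way (to pull scalars through). Second, and more interestingly, your proof never invokes the hypothesis $\phi(\psi(t))\le\psi(t)$, so you have in fact established a slightly stronger theorem than stated: that condition (equivalently, $c\le 1$ once $\phi(t)=ct$) is an artefact of the paper's inductive scheme and is not needed for the conclusion. The trade-off is that the paper's approach gives an explicit $\epsilon$--$N$ bound ($qp_{bl}(x_n,x_m)\le 2s\epsilon$ for $n,m>N$), whereas yours is purely qualitative via contradiction.
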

\begin{proof}
Let $x_0 \in X$. Define the sequence $\{x_n \}$ by $x_n=T^n x_0$ for each $n \in \mathbb{N}$. We have $\phi(qp_{bl}(x_n,x_{n+1})) \leq \frac{\phi(qp_{bl}(x_{n-1},x_n))}{s}-\psi(qp_{bl}(x_{n-1},x_n)) \leq \phi(qp_{bl}(x_{n-1},x_n))$. Since $\phi$ is a monotone non-decreasing function. Therefore, $qp_{bl}(x_n,x_{n+1}) \leq qp_{bl}(x_{n-1},x_n)$. This gives $\{qp_{bl}(x_n,x_{n+1})\}$ is a monotone decreasing sequence then there exists $a \geq 0$ such that $\lim\limits_{n \rightarrow \infty} qp_{bl}(x_n,x_{n+1})=a$. Since $\phi(qp_{bl}(x_n,x_{n+1})) \leq \frac{\phi(qp_{bl}(x_{n-1},x_n))}{s}-\psi(qp_{bl}(x_{n-1},x_n))$. 
Letting $n \rightarrow \infty $ and using continuity of $\phi$ and $\psi$  we have $\phi(a) \leq \phi(a)-\psi(a)$. 
Therefore, a=0. Thus,  $\lim\limits_{n \rightarrow \infty} qp_{bl}(x_n,x_{n+1})=0$. Similarly, $\lim\limits_{n \rightarrow \infty} qp_{bl}(x_{n+1},x_n)=0$.
Now we show that $\{x_n\}$ is a 0-Cauchy sequence. For $\epsilon >0$, we can choose $N_1,N_2 \in \mathbb{N}$ such that 
\[ qp_{bl}(x_n,x_{n+1})< \min\Big\{\frac{\epsilon}{2s}, \psi(\frac{\epsilon}{2s})\Big\} \qquad \mbox{for} \thinspace \thinspace n \geq N_1, \]
and
\[ qp_{bl}(x_{n+1},x_n)< \min\Big\{\frac{\epsilon}{2s}, \psi(\frac{\epsilon}{2s})\Big\} \qquad \mbox{for} \thinspace \thinspace n \geq N_2. \]
Choose $N=\max \{N_1,N_2\}$. We claim if $qp_{bl}(x,x_{n_0}) \leq \epsilon$ for $n_0 >N$, then $qp_{bl}(Tx,x_{n_0}) \leq \epsilon$.\\
\textbf{Case-1} If $qp_{bl}(x,x_{n_0}) \leq \frac{\epsilon}{2s}$. We have $qp_{bl}(Tx,x_{n_0}) \leq s[qp_{bl}(Tx,Tx_{n_0}) + qp_{bl}(Tx_{n_0},x_{n_0})]-qp_{bl}(Tx_{n_0},Tx_{n_0})\leq s[qp_{bl}(Tx,Tx_{n_0}) + qp_{bl}(Tx_{n_0},x_{n_0})]$. Since $\phi$ is monotone non-decreasing and linear. Therefore, we have
\begin{align*}
\phi(qp_{bl}(Tx,x_{n_0}))&\leq s\phi(qp_{bl}(Tx,Tx_{n_0}))+s\phi(qp_{bl}(Tx_{n_0},x_{n_0}))\\
&\leq \phi(qp_{bl}(x,x_{n_0}))-s\psi(qp_{bl}(x,x_{n_0})+s \phi(qp_{bl}(Tx_{n_0},x_{n_0}))\\
& \leq \phi\Big( \frac{\epsilon}{2s} \Big)-s\psi(qp_{bl}(x,x_{n_0})+s \phi\Big( \frac{\epsilon}{2s} \Big)\\
& \leq \phi \Big( \frac{\epsilon}{2s} + \frac{\epsilon}{2}\Big).
\end{align*} 
Therefore, $qp_{bl}(Tx,x_{n_0}) \leq \epsilon$.\\
\textbf{Case-2} If $\frac{\epsilon}{2s} \leq qp_{bl}(x,x_{n_0}) \leq \epsilon$. Consider
\begin{align*}
\phi(qp_{bl}(Tx,x_{n_0}))&\leq s\phi(qp_{bl}(Tx,Tx_{n_0}))+s\phi(qp_{bl}(Tx_{n_0},x_{n_0}))\\
&\leq \phi(qp_{bl}(x,x_{n_0}))-s\psi(qp_{bl}(x,x_{n_0})+s \phi(qp_{bl}(Tx_{n_0},x_{n_0}))\\
&\leq \phi(\epsilon)-s \psi\Big( \frac{\epsilon}{2s} \Big)+s \phi \Big( \psi\Big( \frac{\epsilon}{2s} \Big)\Big)\\
&\leq \phi(\epsilon)-s \psi\Big( \frac{\epsilon}{2s} \Big)+s  \psi\Big( \frac{\epsilon}{2s} \Big).
\end{align*}
Therefore, $qp_{bl}(Tx,x_{n_0}) \leq \epsilon$. Thus, the claim is true. Similarly, if  $qp_{bl}(x_{n_0},x)$ $\leq \epsilon$ for $n_0 >N$, then $qp_{bl}(x_{n_0},Tx) \leq \epsilon$. As $qp_{bl}(x_{n_0+1},x_{n_0}) \leq \epsilon$. Therefore, our claim implies that $qp_{bl}(Tx_{n_0+1},x_{n_0}) \leq \epsilon$. Continuing like this we get, $qp_{bl}(x_n,x_{n_0}) \leq \epsilon$ for all $n>n_0$. A similar argument shows $qp_{bl}(x_{n_0},x_m) \leq \epsilon$ for all $m>n_0$. Then for $n,m>N$, we have $qp_{bl}(x_n,x_m) \leq s[qp_{bl}(x_n,x_{n_0})+qp_{bl}(x_{n_0},x_m)]-qp_{bl}(x_{n_0},x_{n_0})\leq s[qp_{bl}(x_n,x_{n_0})+qp_{bl}(x_{n_0},x_m)] \leq 2s \epsilon$. 
Therefore, $\lim\limits_{n,m \rightarrow \infty} qp_{bl}(x_n,x_m)=0$. Similarly, $\lim\limits_{n,m \rightarrow \infty} qp_{bl}(x_m,x_n)=0$. Therefore, there exists $z \in X$ such that $\lim\limits_{n \rightarrow \infty} qp_{bl}(x_n,z)=\lim\limits_{n \rightarrow \infty} qp_{bl}(z,x_n) =qp_{bl}(z,z)=0=\lim\limits_{n,m \rightarrow \infty} qp_{bl}(x_n,x_m)= \lim\limits_{n,m \rightarrow \infty} qp_{bl}(x_m,x_n)$. As $qp_{bl}(z,Tz) \leq s[qp_{bl}(z,x_n)+qp_{bl}(x_n,Tz)]-qp_{bl}(x_n,x_n)\leq s qp_{bl}(z,x_n)+s qp_{bl}(x_n,Tz)]$. Then
\begin{align*}
\phi(qp_{bl}(z,Tz))&\leq s \phi(qp_{bl}(z,x_n))+s \phi(qp_{bl}(x_n,Tz))\\
& \leq s \phi(qp_{bl}(z,x_n))+\phi(qp_{bl}(x_{n-1},z))-s \psi(qp_{bl}(x_{n-1},z))\\
& \leq s \phi(qp_{bl}(z,x_n))+\phi(qp_{bl}(x_{n-1},z)).
\end{align*}
Letting $n \rightarrow \infty $ and using continuity of $\phi$ we have $\phi(qp_{bl}(z,Tz))=0$. Therefore, $z=Tz$. Let $z$ and $w$ be two fixed points of $T$. Then $\phi(qp_{bl}(z,w)) \leq \frac{\phi(qp_{bl}(z,w))}{s}-\psi(qp_{bl}(z,w)) \leq \phi(qp_{bl}(z,w))-\psi(qp_{bl}(z,w))$. Hence, $z=w$.
\end{proof}
\begin{example}
\emph{Let $X=\{0,1,2\}$. Define $qp_{bl}:X\times X \rightarrow [0,\infty)$ as\\
$qp_{bl}(0,0)=0, \hspace{2cm} qp_{bl}(0,1)=2, \hspace{2cm} qp_{bl}(0,2)=6,$\\
$qp_{bl}(1,0)=2, \hspace{2cm} qp_{bl}(1,1)=1, \hspace{2cm} qp_{bl}(1,2)=5,$\\
$qp_{bl}(2,0)=5, \hspace{2cm} qp_{bl}(2,1)=8, \hspace{2cm} qp_{bl}(2,2)=2.$ \\
Then $(X,qp_{bl})$ is a quasi-partial b-metric-like space with $s=\frac{8}{7}$. Let $\phi,\psi:[0,\infty) \rightarrow[0,\infty)$ be defined as $\phi (t)=\frac{t}{2}$ and $\psi (t)=\left\{\begin{array}{cc}
\frac{t^2}{4},& \mbox{if}\thinspace  t \leq 1,\\
\frac{1}{4},&\mbox{if} \thinspace t >1.
\end{array}
\right. $\\
 Define the mapping $T:X \rightarrow X$ as $T0=0$, $T1=0$ and $T2=1$. We observe that \\
$\phi(qp_{bl}(T0,T0))=0 \leq \frac{7}{8}\phi(qp_{bl}(0,0))-\psi(qp_{bl}(0,0)),$\\
$\phi(qp_{bl}(T0,T1))=0 \leq \frac{5}{8}= \frac{7}{8}\phi(qp_{bl}(0,1))-\psi(qp_{bl}(0,1)),$\\
$\phi(qp_{bl}(T0,T2))=1 \leq \frac{19}{8}= \frac{7}{8}\phi(qp_{bl}(0,2))-\psi(qp_{bl}(0,2)),$\\
$\phi(qp_{bl}(T1,T0))=0 \leq \frac{5}{8}= \frac{7}{8}\phi(qp_{bl}(1,0))-\psi(qp_{bl}(1,0)),$\\
$\phi(qp_{bl}(T1,T1))=0 \leq \frac{3}{16}= \frac{7}{8}\phi(qp_{bl}(1,1))-\psi(qp_{bl}(1,1)),$\\
$\phi(qp_{bl}(T1,T2))=1 \leq \frac{31}{16}= \frac{7}{8}\phi(qp_{bl}(1,2))-\psi(qp_{bl}(1,2)),$\\
$\phi(qp_{bl}(T2,T0))=1 \leq \frac{31}{16}= \frac{7}{8}\phi(qp_{bl}(2,0))-\psi(qp_{bl}(2,0)),$\\
$\phi(qp_{bl}(T2,T1))=1 \leq \frac{13}{4}= \frac{7}{8}\phi(qp_{bl}(2,1))-\psi(qp_{bl}(2,1)),$\\
$\phi(qp_{bl}(T2,T2))=\frac{1}{2} \leq \frac{5}{8}= \frac{7}{8}\phi(qp_{bl}(2,2))-\psi(qp_{bl}(2,2)).$\\
All the hypothesis of Theorem \ref{theorem5.9} are satisfied. Then by Theorem \ref{theorem5.9}, $T$ has a unique fixed point. Hence, 0 is the unique fixed point of $T$.}
\end{example}
\begin{lemma}\label{lemma5.11}
Let $(X,qp_{bl})$ be a quasi-partial b-metric-like space with coefficient $s \geq 1$ and $T:X \rightarrow X$. Let $\{x_n \}$ be a sequence in $(X,qp_{bl})$, then for $m>n$ we have
\begin{itemize}
\item[(i)]$qp_{bl}(x_n,x_m) \leq \sum \limits _{k=n}^{m-1} s^k qp_{bl}(x_k,x_{k+1}),$\\
\item[(ii)] $qp_{bl}(x_m,x_n) \leq \sum \limits _{k=n}^{m-1} s^k qp_{bl}(x_{k+1},x_{k}).$
\end{itemize}
\end{lemma}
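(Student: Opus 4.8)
The plan is to establish both inequalities by induction on the length $L=m-n$ of the chain joining $x_n$ to $x_m$, peeling off the first increment at each step by means of the triangle inequality (QPbl$_4$) and discarding the nonpositive self-distance term $-qp_{bl}(z,z)$ that it produces. Throughout I would abbreviate $a_k=qp_{bl}(x_k,x_{k+1})$, so that (i) reads $qp_{bl}(x_n,x_m)\le\sum_{k=n}^{m-1}s^k a_k$; part (ii) is the exact mirror image, with $b_k=qp_{bl}(x_{k+1},x_k)$ in place of $a_k$ and (QPbl$_4$) applied with its two arguments interchanged, so it suffices to describe (i).

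For the induction I would first prove the sharper, index-shifted bound
\[
qp_{bl}(x_n,x_m)\le\sum_{j=0}^{m-n-1}s^{\,j+1}a_{n+j},
\]
in which the increment $a_{n+j}$ carries the factor $s^{\,j+1}$ recording how many times it has been passed through the triangle inequality. The base case $L=1$ is the assertion $qp_{bl}(x_n,x_{n+1})\le s\,a_n$, which holds because $s\ge1$. For the inductive step I would apply (QPbl$_4$) with intermediate point $x_{n+1}$ to get $qp_{bl}(x_n,x_m)\le s\,a_n+s\,qp_{bl}(x_{n+1},x_m)$, invoke the hypothesis on the shorter chain $x_{n+1},\dots,x_m$ (length $L-1$, starting index $n+1$), and reindex: the single extra factor of $s$ promotes each exponent by one and reproduces the displayed bound at length $L$.

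The final, cosmetic step converts the index-shifted coefficients into the stated ones: since $s\ge1$ and $k=n+j\ge j+1$ (as $n\ge1$), we have $s^{\,j+1}\le s^{k}$ for every $k$ in range, and substituting term by term yields $qp_{bl}(x_n,x_m)\le\sum_{k=n}^{m-1}s^k a_k$. I expect the only genuine subtlety to be this exponent bookkeeping. A naive induction on $m$ with $n$ held fixed does \emph{not} reproduce the coefficients $s^k$, because each additional triangle step multiplies the entire accumulated sum by $s$ and thereby inflates the exponents of the earlier increments; organizing the induction instead on the length $L$ with a shifting base index avoids this difficulty, after which the monotonicity estimate $s^{\,k-n+1}\le s^{k}$ — which is exactly where the hypothesis $s\ge1$ is used — delivers the claimed form, with part (ii) following by the symmetric argument.
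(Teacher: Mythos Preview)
Your proof is correct and follows essentially the same approach as the paper: iteratively apply (QPbl$_4$) peeling off one increment at a time to obtain the sharper bound with coefficients $s^{j+1}$ on $a_{n+j}$, then use $s\ge1$ and $n\ge1$ to enlarge each coefficient to $s^{n+j}=s^k$. The only difference is cosmetic---you organize the iteration as a formal induction on the chain length $L=m-n$, whereas the paper writes out the telescoping expansion directly---and you are in fact more careful than the paper in making the implicit assumption $n\ge1$ explicit at the final exponent comparison.
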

\begin{proof} (i) Since $(X, qp_{bl})$ is a quasi-partial b-metric-like space with coefficient $s \geq 1$. Therefore, 
 \begin{align*}
qp_{bl}(x_n,x_m) & \leq s[qp_{bl}(x_n,x_{n+1})+ qp_{bl}(x_{n+1},x_m)]-qp_{bl}(x_{n+1},x_{n+1})\\
& \leq s[qp_{bl}(x_n,x_{n+1})+ qp_{bl}(x_{n+1},x_m)]\\
&\leq s qp_{bl}(x_n,x_{n+1})+s^2 qp_{bl}(x_{n+1},x_{n+2})+s^2 qp_{bl}(x_{n+2},x_m)\\
& \quad -s qp_{bl}(x_{n+2},x_{n+2})\\
& \leq s qp_{bl}(x_n,x_{n+1})+s^2 qp_{bl}(x_{n+1},x_{n+2})+s^2 qp_{bl}(x_{n+2},x_m)\\
& \leq s qp_{bl}(x_n,x_{n+1})+s^2 qp_{bl}(x_{n+1},x_{n+2})+s^3 qp_{bl}(x_{n+2},x_{n+3})+\\
& \quad \ldots + s^{m-n-1} qp_{bl}(x_{m-2},x_{m-1})+s^{m-n-1} qp_{bl}(x_{m-1},x_{m})\\
& \leq \sum \limits _{k=n}^{m-1} s^k qp_{bl}(x_k,x_{k+1}).
\end{align*}
(ii) Since $(X, qp_{bl})$ is a quasi-partial b-metric-like space with coefficient $s \geq 1$. Therefore, 
 \begin{align*}
qp_{bl}(x_m,x_n) & \leq s[qp_{bl}(x_m,x_{n+1})+qp_{bl}(x_{n+1},x_n)]-qp_{bl}(x_{n+1},x_{n+1})\\
& \leq s^n qp_{bl}(x_m,x_{n+1})+ s^n qp_{bl}(x_{n+1},x_n)\\
& \leq s^n [s \{ qp_{bl}(x_m,x_{n+2})+qp_{bl}(x_{n+2},x_{n+1}) \}-qp_{bl}(x_{n+2},x_{n+2})]\\
& \quad +s^n qp_{bl}(x_{n+1},x_n)\\
& \leq s^{n+1}qp_{bl}(x_m,x_{n+2})+ s^{n+1}qp_{bl}(x_{n+2},x_{n+1})+s^n qp_{bl}(x_{n+1},x_n)\\
& \leq s^{m-2} qp_{bl}(x_m,x_{m-1})+s^{m-2} qp_{bl}(x_{m-1},x_{m-2})\\
& \quad + \ldots +s^{n+1} qp_{bl}(x_{n+2},x_{n+1})+s^{n} qp_{bl}(x_{n+1},x_{n})\\
& \leq \sum \limits _{k=n}^{m-1} s^k qp_{bl}(x_{k+1},x_k).
\end{align*}
\end{proof}
\begin{lemma}\label{lemma5.12}
Let $(X,qp_{bl})$ be a quasi-partial b-metric-like space with coefficient $s \geq 1$. Let $\{y_n\}$ be a sequence in $(X,qp_{bl})$ such that 
$$qp_{bl}(y_n,y_{n+1}) \leq \lambda [ qp_{bl}(y_{n-1},y_n)+qp_{bl}(y_n,y_{n-1})],$$
and 
$$qp_{bl}(y_{n+1},y_n) \leq \lambda [ qp_{bl}(y_n,y_{n-1})+qp_{bl}(y_{n-1},y_n)],$$
for some $\lambda$ such that $0< \lambda < \frac{1}{2s}$. Then for $m>n$, we have $$\lim\limits_{n,m\rightarrow \infty} qp_{bl}(y_n,y_m) =0 \quad \mbox{and} \quad \lim\limits_{n,m\rightarrow \infty} qp_{bl}(y_m,y_n) =0.$$
\end{lemma}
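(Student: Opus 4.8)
The plan is to collapse the two coupled inequalities into a single one-term geometric recursion and then feed the resulting decay estimates into Lemma \ref{lemma5.11}. First I would introduce the abbreviations $a_n=qp_{bl}(y_n,y_{n+1})$ and $b_n=qp_{bl}(y_{n+1},y_n)$, so that the hypotheses read $a_n\le\lambda(a_{n-1}+b_{n-1})$ and $b_n\le\lambda(b_{n-1}+a_{n-1})$. Adding these and setting $c_n=a_n+b_n$ gives $c_n\le 2\lambda c_{n-1}$, whence by iteration $c_n\le(2\lambda)^n c_0$. Since $a_n,b_n\ge 0$ we have $a_n\le c_n$ and $b_n\le c_n$, so both $a_n\le(2\lambda)^n c_0$ and $b_n\le(2\lambda)^n c_0$. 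Note also that $0<\lambda<\frac{1}{2s}$ together with $s\ge 1$ forces $2\lambda<\frac{1}{s}\le 1$, so the recursion genuinely decays.

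Next I would invoke Lemma \ref{lemma5.11}(i), which for $m>n$ gives $qp_{bl}(y_n,y_m)\le\sum_{k=n}^{m-1}s^k qp_{bl}(y_k,y_{k+1})=\sum_{k=n}^{m-1}s^k a_k$. Substituting $a_k\le(2\lambda)^k c_0$ yields
\[
qp_{bl}(y_n,y_m)\le c_0\sum_{k=n}^{m-1}(2s\lambda)^k.
\]
The crucial observation is that the hypothesis $\lambda<\frac{1}{2s}$ is precisely what guarantees $2s\lambda<1$, so $\sum_{k=0}^{\infty}(2s\lambda)^k$ converges and its tail satisfies $\sum_{k=n}^{m-1}(2s\lambda)^k\le\frac{(2s\lambda)^n}{1-2s\lambda}$, which tends to $0$ as $n\to\infty$. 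Hence $\lim_{n,m\to\infty}qp_{bl}(y_n,y_m)=0$.

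For the second limit I would repeat the argument verbatim using Lemma \ref{lemma5.11}(ii), which bounds $qp_{bl}(y_m,y_n)$ by $\sum_{k=n}^{m-1}s^k qp_{bl}(y_{k+1},y_k)=\sum_{k=n}^{m-1}s^k b_k$; the same estimate $b_k\le(2\lambda)^k c_0$ gives $qp_{bl}(y_m,y_n)\le c_0\sum_{k=n}^{m-1}(2s\lambda)^k\to 0$, establishing $\lim_{n,m\to\infty}qp_{bl}(y_m,y_n)=0$.

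The step I expect to carry the real content is the passage from the coupled pair of inequalities to a single one-term recursion: symmetrizing by summing $a_n$ and $b_n$ is what decouples them and manufactures the contraction factor $2\lambda$. After that, the only point to watch is that the factor $s^k$ arising from the triangle-type inequality of Lemma \ref{lemma5.11} combines with the decay $(2\lambda)^k$ into $(2s\lambda)^k$, and that the precise range $\lambda<\frac{1}{2s}$ keeps the common ratio strictly below one; everything else reduces to a routine geometric-series tail estimate.
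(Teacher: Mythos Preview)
Your proof is correct and follows essentially the same approach as the paper: both iterate the coupled hypotheses to obtain the decay $qp_{bl}(y_k,y_{k+1})\le C(2\lambda)^k$ and then apply Lemma~\ref{lemma5.11} together with a geometric-series tail estimate with ratio $2s\lambda<1$. Your device of summing the two inequalities first to get the clean recursion $c_n\le 2\lambda c_{n-1}$ is a slightly tidier packaging than the paper's step-by-step substitution (which yields the marginally sharper constant $\tfrac{1}{2}(2\lambda)^n c_0$), but the argument is otherwise identical.
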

\begin{proof}
 We have $qp_{bl}(y_n,y_{n+1}) \leq \lambda qp_{bl}(y_{n-1},y_n)+ \lambda qp_{bl}(y_n,y_{n-1})$.\\
  This gives
$qp_{bl}(y_n,y_{n+1}) \leq \lambda[\lambda\{qp_{bl}(y_{n-2},y_{n-1})+qp_{bl}(y_{n-1},y_{n-2})\}]$\\$+ \lambda[\lambda\{qp_{bl}(y_{n-1},y_{n-2})$ $+qp_{bl}(y_{n-2},y_{n-1})\}]$. 
Proceeding likewise we get, $$qp_{bl}(y_n,y_{n+1}) \leq 2^{n-1} \lambda^n [qp_{bl}(y_0,y_1)+qp_{bl}(y_1,y_0)].$$ Then for $m>n$, and using lemma \ref{lemma5.11} we have
\begin{align*}
qp_{bl}(y_n,y_m)&\leq \sum \limits _{k=n}^{m-1} s^k qp_{bl}(y_k,y_{k+1}) \\
& \leq \sum \limits _{k=n}^{m-1} (2s\lambda)^k \Big( \frac{qp_{bl}(y_0,y_1)+qp_{bl}(y_1,y_0)}{2}\Big)\\
& \leq \frac{(2s\lambda)^n}{1-2s\lambda}\Big( \frac{qp_{bl}(y_0,y_1)+qp_{bl}(y_1,y_0)}{2}\Big).
\end{align*}
Since $0< 2s \lambda <1$ therefore, $\lim\limits_{n,m\rightarrow \infty} qp_{bl}(y_n,y_m) =0$. A similar argument shows $\lim\limits_{n,m\rightarrow \infty} qp_{bl}(y_m,y_n) =0$.
\end{proof}
\begin{theorem}
Let $(X,qp_{bl})$ be 0-complete quasi-partial b-metric-like space with coefficient $s \geq 1$. Let $T: X \rightarrow X$ be a surjective map such that 
\begin{align*}
 qp_{bl}(Tx,Ty) &\geq a_1[qp_{bl}(x,y)+qp_{bl}(y,x)]+a_2[qp_{bl}(x,Tx)+qp_{bl}(Tx,x)]\\
 & \quad +a_3[qp_{bl}(y,Ty)+qp_{bl}(Ty,y)]+a_4[qp_{bl}(x,Ty)+qp_{bl}(Ty,x)],
 \end{align*}
for all $x,y \in X$ where $a_i>0$ for each $i=1,2,3,4$ satisfying $1+a_4-a_3 >0$, $s(a_1+a_2)+2s^2(a_3-a_4)+a_4 >2s^2$ and $a_1+a_4 \geq 1$. Then $T$ has a unique fixed point.
\end{theorem}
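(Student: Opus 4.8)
The plan is to exploit surjectivity to run the Picard iteration \emph{backwards}, which converts the expansion hypothesis into a genuine contraction for the symmetrized one-step distance, and then to invoke Lemma~\ref{lemma5.12}. Fix $x_0\in X$. Since $T$ is surjective, choose inductively $x_{n+1}$ with $Tx_{n+1}=x_n$, so that $Tx_{n+1}=x_n$ and $Tx_n=x_{n-1}$ for every $n$. Set $A_n:=qp_{bl}(x_n,x_{n-1})+qp_{bl}(x_{n-1},x_n)$. If $A_n=0$ for some $n$, then $x_n=x_{n-1}=Tx_n$ is already a fixed point, so I may assume $A_n>0$ throughout.

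First I would establish the recursive contraction. Substituting $x=x_{n+1}$, $y=x_n$ into the hypothesis and using $Tx_{n+1}=x_n$, $Tx_n=x_{n-1}$ gives
\[ qp_{bl}(x_n,x_{n-1})\geq (a_1+a_2)A_{n+1}+a_3A_n+a_4\big[qp_{bl}(x_{n+1},x_{n-1})+qp_{bl}(x_{n-1},x_{n+1})\big]. \]
Since $qp_{bl}(x_n,x_{n-1})\leq A_n$, it remains to bound the cross terms from below. Applying (QPbl$_4$) in the forms $qp_{bl}(x_{n+1},x_n)\leq s[qp_{bl}(x_{n+1},x_{n-1})+qp_{bl}(x_{n-1},x_n)]$ and its symmetric companion yields $qp_{bl}(x_{n+1},x_{n-1})+qp_{bl}(x_{n-1},x_{n+1})\geq \tfrac{1}{s}A_{n+1}-A_n$. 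Substituting and collecting terms produces $A_n(1+a_4-a_3)\geq(a_1+a_2+\tfrac{a_4}{s})A_{n+1}$, that is, $A_{n+1}\leq\mu A_n$ with $\mu=\dfrac{1+a_4-a_3}{a_1+a_2+a_4/s}$. The hypothesis $1+a_4-a_3>0$ makes $\mu>0$, and a direct rearrangement shows that the stated inequality $s(a_1+a_2)+2s^2(a_3-a_4)+a_4>2s^2$ is exactly equivalent to $\mu<\tfrac{1}{2s}$.

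With $A_{n+1}\leq\mu A_n$ in hand, both $qp_{bl}(x_n,x_{n+1})$ and $qp_{bl}(x_{n+1},x_n)$ are at most $A_{n+1}\leq\mu A_n$, so $\{x_n\}$ meets the hypotheses of Lemma~\ref{lemma5.12} with $\lambda=\mu<\tfrac{1}{2s}$; hence $\lim_{n,m}qp_{bl}(x_n,x_m)=0=\lim_{n,m}qp_{bl}(x_m,x_n)$ and $\{x_n\}$ is $0$-Cauchy. By $0$-completeness there is $z\in X$ with $qp_{bl}(z,z)=0$ and $qp_{bl}(x_n,z),\,qp_{bl}(z,x_n)\to 0$. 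To see $Tz=z$, pick $z'$ with $Tz'=z$ (surjectivity again) and apply the hypothesis with $x=z'$, $y=x_{n+1}$: since all terms on the right are nonnegative, discarding everything but the $a_2$-term gives $qp_{bl}(z,x_n)\geq a_2[qp_{bl}(z',z)+qp_{bl}(z,z')]$, and letting $n\to\infty$ forces $qp_{bl}(z',z)+qp_{bl}(z,z')=0$, whence $z'=z$ by (QPbl$_1$) and $Tz=Tz'=z$.

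Finally, for uniqueness suppose $Tz=z$ and $Tw=w$. Applying the hypothesis with $(x,y)=(z,w)$ and with $(x,y)=(w,z)$ and adding yields $S\geq 2(a_1+a_4)S+2(a_2+a_3)[qp_{bl}(z,z)+qp_{bl}(w,w)]$, where $S=qp_{bl}(z,w)+qp_{bl}(w,z)$; since $a_1+a_4\geq 1$ the factor $1-2(a_1+a_4)$ is negative, forcing $S=0$ and hence $z=w$. The conceptual key is the backward iteration made possible by surjectivity, which is what distinguishes this expansion result from the earlier contraction theorems. The main technical obstacle is the estimate in the second paragraph: one must bound the cross terms $qp_{bl}(x_{n+1},x_{n-1})$ from \emph{below} through (QPbl$_4$) rather than above, so that the contraction factor $\mu$ aligns precisely with $s(a_1+a_2)+2s^2(a_3-a_4)+a_4>2s^2$.
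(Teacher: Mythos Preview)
Your proof is correct and follows essentially the same route as the paper: backward iteration via surjectivity, the same lower bound $\tfrac{1}{s}A_{n+1}-A_n$ for the cross terms via (QPbl$_4$), the same contraction factor $\mu=\dfrac{1+a_4-a_3}{a_1+a_2+a_4/s}$, and the appeal to Lemma~\ref{lemma5.12}. Your verification that $Tz=z$ and your uniqueness argument differ only in which terms you keep: the paper applies the hypothesis with $(x,y)=(x_{n+1},u)$ and extracts the $a_1$- and $a_3$-terms (again via a (QPbl$_4$) lower bound) to force $\big(\tfrac{a_1}{s}+a_3\big)[qp_{bl}(u,z)+qp_{bl}(z,u)]\le 0$, whereas you swap the roles and simply discard all but the $a_2$-term, which is shorter; for uniqueness the paper uses a single application of the inequality rather than your symmetrized sum, but both reach $z=w$ immediately from $a_1+a_4\ge 1$.
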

\begin{proof}
Suppose that $x_0 \in X$. Since $T$ is surjective therefore, there exists $x_1 \in X$ such that $Tx_1=x_0$. Define a sequence $\{x_n\}$ by $x_n=Tx_{n+1}$ for each $n \in \mathbb{N}$. If  $x_{n_0}=x_{n_0+1}$ for some $n_0 \in \mathbb{N}$, then $x_{n_0+1}$ is a fixed point of $T$. Suppose that $x_n \neq x_{n+1}$ for all $n \in \mathbb{N}$. Consider
\begin{align*}
qp_{bl}(x_n,x_{n-1})&\geq a_1[qp_{bl}(x_{n+1},x_n)+qp_{bl}(x_n,x_{n+1})]+a_2[qp_{bl}(x_{n+1},Tx_{n+1})\\
& \quad +qp_{bl}(Tx_{n+1},x_{n+1})] +a_3[qp_{bl}(x_n,Tx_n)+qp_{bl}(Tx_n,x_n)]\\
& \quad +a_4[qp_{bl}(x_{n+1},Tx_n)+qp_{bl}(Tx_n,x_{n+1})]\\
 & = a_1[qp_{bl}(x_{n+1},x_n)+qp_{bl}(x_n,x_{n+1})]+a_2[qp_{bl}(x_{n+1},x_n)\\
 & \quad +qp_{bl}(x_n,x_{n+1})] +a_3[qp_{bl}(x_n,x_{n-1})+qp_{bl}(x_{n-1},x_n)]\\
 & \quad +a_4[qp_{bl}(x_{n+1},x_{n-1})+qp_{bl}(x_{n-1},x_{n+1})].
\end{align*}
Since $qp_{bl}(x_n,x_{n+1}) \leq s[qp_{bl}(x_n,x_{n-1})+qp_{bl}(x_{n-1},x_{n+1})]-qp_{bl}(x_{n-1},x_{n-1})$. This gives $qp_{bl}(x_n,x_{n+1}) \leq s[qp_{bl}(x_n,x_{n-1})+qp_{bl}(x_{n-1},x_{n+1})]$. Therefore, $qp_{bl}(x_{n-1},x_{n+1}) \geq \frac{qp_{bl}(x_n,x_{n+1})-sqp_{bl}(x_n,x_{n-1})}{s}$. Similarly, $qp_{bl}(x_{n+1},x_{n-1}) \geq \frac{qp_{bl}(x_{n+1},x_n)-sqp_{bl}(x_{n-1},x_n)}{s}$. Thus,
\begin{align*}
qp_{bl}(x_n,x_{n-1})&\geq (a_1+a_2)[qp_{bl}(x_{n+1},x_n)+qp_{bl}(x_n,x_{n+1})]\\
& \quad +a_3[qp_{bl}(x_n,x_{n-1})+qp_{bl}(x_{n-1},x_n)]+ \frac{a_4}{s}[qp_{bl}(x_{n+1},x_n)\\
& \quad -sqp_{bl}(x_{n-1},x_n)+qp_{bl}(x_n,x_{n+1})-sqp_{bl}(x_n,x_{n-1})]\\
 &= \Big(a_1+a_2+\frac{a_4}{s}\Big)[qp_{bl}(x_{n+1},x_n)+qp_{bl}(x_n,x_{n+1})]\\
 & \quad +(a_3-a_4)[qp_{bl}(x_n,x_{n-1})+qp_{bl}(x_{n-1},x_n)].
\end{align*}
Therefore, $qp_{bl}(x_{n+1},x_n)+qp_{bl}(x_n,x_{n+1}) \leq \lambda [qp_{bl}(x_n,x_{n-1})+qp_{bl}(x_{n-1},x_n)]$ where $\lambda= \frac{1+a_4-a_3}{a_1+a_2+\frac{a_4}{s}}$. As $1+a_4-a_3 >0$ and $s(a_1+a_2)+2s^2(a_3-a_4)+a_4 >2s^2$,  this gives $0< \lambda < \frac{1}{2s}$. Then by lemma \ref{lemma5.12}, we have $\lim\limits_{n,m\rightarrow \infty} qp_{bl}(x_n,x_m) =0$. Similarly, $\lim\limits_{n,m\rightarrow \infty} qp_{bl}(x_m,x_n) =0$. Therefore, there exists $z\in X$ such that $ \lim\limits_{n \rightarrow \infty} qp_{bl}(x_n,z)=\lim\limits_{n \rightarrow \infty} qp_{bl}(z,x_n)=qp_{bl}(z,z)=0=\lim\limits_{n,m\rightarrow \infty} qp_{bl}(x_n,x_m)=\lim\limits_{n,m\rightarrow \infty} qp_{bl}(x_m,x_n)$. Since $T$ is surjective then there exists $u \in X$ such that $Tu=z$. Consider 
\begin{align*}
qp_{bl}(x_n,z) &\geq a_1[qp_{bl}(x_{n+1},u)+qp_{bl}(u,x_{n+1})]+a_2[qp_{bl}(x_{n+1},Tx_{n+1})\\
&\quad +qp_{bl}(Tx_{n+1},x_{n+1})]+ a_3[qp_{bl}(u,Tu)+qp_{bl}(Tu,u)]\\
&\quad +a_4[qp_{bl}(x_{n+1},Tu)+qp_{bl}(Tu,x_{n+1})]\\
&= a_1[qp_{bl}(x_{n+1},u)+qp_{bl}(u,x_{n+1})]+a_2[qp_{bl}(x_{n+1},x_n)\\
& \quad +qp_{bl}(x_n,x_{n+1})]+ a_3[qp_{bl}(u,z)+qp_{bl}(z,u)]+a_4[qp_{bl}(x_{n+1},z)\\
& \quad +qp_{bl}(z,x_{n+1})].
\end{align*}
Since $qp_{bl}(u,z) \leq s[qp_{bl}(u,x_{n+1})+ qp_{bl}(x_{n+1},z)]-qp_{bl}(x_{n+1},x_{n+1})$. This implies $qp_{bl}(u,z) \leq s[qp_{bl}(u,x_{n+1})+ qp_{bl}(x_{n+1},z)]$. Therefore, $ qp_{bl}(u,x_{n+1}) \geq \frac{qp_{bl}(u,z)-s qp_{bl}(x_{n+1},z)}{s}$. Similarly, $ qp_{bl}(x_{n+1},u) \geq \frac{qp_{bl}(z,u)-s qp_{bl}(z,x_{n+1})}{s}$. Thus, 
\begin{align*}
qp_{bl}(x_n,z) &\geq \frac{a_1}{s}[qp_{bl}(z,u)-sqp_{bl}(z,x_{n+1})+qp_{bl}(u,z)-sqp_{bl}(x_{n+1},z)]\\
& \quad +a_2[qp_{bl}(x_{n+1},x_n)+qp_{bl}(x_n,x_{n+1})]
+ a_3[qp_{bl}(u,z)\\
& \quad+qp_{bl}(z,u)]+a_4[qp_{bl}(x_{n+1},z)+qp_{bl}(z,x_{n+1})]\\
&= \Big( \frac{a_1}{s}+a_3\Big)[qp_{bl}(u,z)+qp_{bl}(z,u)]+a_2 [qp_{bl}(x_{n+1},x_n)\\
& \quad +qp_{bl}(x_n,x_{n+1})] +(a_4-a_1)[qp_{bl}(x_{n+1},z)+qp_{bl}(z,x_{n+1})].
\end{align*}
Letting $n \rightarrow \infty$ we get, $ \Big( \frac{a_1}{s}+a_3\Big)[qp_{bl}(u,z)+qp_{bl}(z,u)] \leq 0$. This implies that $u=z$. Let $z$ and $w$ be two fixed points of $T$. Then 
\begin{align*}
qp_{bl}(z,w) =qp_{bl}(Tz,Tw) &\geq a_1[qp_{bl}(z,w)+qp_{bl}(w,z)]+a_2[qp_{bl}(z,Tz)+\\
& \quad qp_{bl}(Tz,z)]+ a_3[qp_{bl}(w,Tw)+qp_{bl}(Tw,w)]\\
& \quad+a_4[qp_{bl}(z,Tw)+qp_{bl}(Tw,z)]\\
&=(a_1+a_4)[qp_{bl}(z,w)+qp_{bl}(w,z)]+2a_2 qp_{bl}(z,z)
\end{align*}
\begin{align*}
\hspace{2cm} & \quad+2a_3 qp_{bl}(w,w)\\
& \geq (a_1+a_4)[qp_{bl}(z,w)+qp_{bl}(w,z)]\\
& \geq qp_{bl}(z,w)+qp_{bl}(w,z).
\end{align*}
This implies that $z=w$. Hence, $T$ has a unique fixed point in $X$.
\end{proof}
\begin{corollary}\label{corollary5.14}
Let $(X,qp_{bl})$ be 0-complete quasi-partial b-metric-like space with coefficient $s \geq 1$. Let $T: X \rightarrow X$ be a surjective map such that 
 $$qp_{bl}(Tx,Ty) \geq K[qp_{bl}(x,y)+qp_{bl}(y,x)],$$
 for all $x,y \in X$ where $K>2s$. Then $T$ has a unique fixed point.
\end{corollary}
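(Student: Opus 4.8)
The plan is to read this off as the special case $a_1=K$, $a_2=a_3=a_4=0$ of the preceding theorem. Since that theorem insists on strictly positive coefficients, I would instead reproduce its proof, which collapses to something much shorter once the single-coefficient expansive condition is in force. First, fix $x_0\in X$; using that $T$ is surjective I would choose $x_1$ with $Tx_1=x_0$ and, inductively, $x_{n+1}$ with $Tx_{n+1}=x_n$, so that $Tx_{n+1}=x_n$ and $Tx_n=x_{n-1}$ for every $n$. If $x_{n_0}=x_{n_0+1}$ for some $n_0$ then $x_{n_0}$ is already a fixed point, so I would assume $x_n\neq x_{n+1}$ for all $n$.

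Next I would feed the pairs $(x_{n+1},x_n)$ and $(x_n,x_{n+1})$ into the hypothesis. Because $qp_{bl}(Tx_{n+1},Tx_n)=qp_{bl}(x_n,x_{n-1})$ and $qp_{bl}(Tx_n,Tx_{n+1})=qp_{bl}(x_{n-1},x_n)$, the expansive inequality yields $qp_{bl}(x_n,x_{n-1})\geq K[qp_{bl}(x_{n+1},x_n)+qp_{bl}(x_n,x_{n+1})]$ together with the symmetric statement obtained by swapping the arguments. Dividing by $K$ and discarding the nonnegative term that is not needed, I obtain $qp_{bl}(x_n,x_{n+1})\leq \tfrac1K[qp_{bl}(x_{n-1},x_n)+qp_{bl}(x_n,x_{n-1})]$ and $qp_{bl}(x_{n+1},x_n)\leq \tfrac1K[qp_{bl}(x_n,x_{n-1})+qp_{bl}(x_{n-1},x_n)]$. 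These are precisely the two hypotheses of Lemma~\ref{lemma5.12} with $\lambda=\tfrac1K$, and $K>2s$ guarantees $0<\lambda<\tfrac1{2s}$. Lemma~\ref{lemma5.12} then gives $\lim_{n,m\to\infty}qp_{bl}(x_n,x_m)=0=\lim_{n,m\to\infty}qp_{bl}(x_m,x_n)$, so $\{x_n\}$ is a $0$-Cauchy sequence; $0$-completeness produces $z\in X$ with $\lim_n qp_{bl}(x_n,z)=\lim_n qp_{bl}(z,x_n)=qp_{bl}(z,z)=0$.

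For existence I would use surjectivity once more to pick $u$ with $Tu=z$ and apply the hypothesis to $(x_{n+1},u)$, giving $qp_{bl}(x_n,z)=qp_{bl}(Tx_{n+1},Tu)\geq K[qp_{bl}(x_{n+1},u)+qp_{bl}(u,x_{n+1})]$. Bounding the right-hand side from below through (QPbl$_{4}$) in the form $qp_{bl}(u,z)\leq s[qp_{bl}(u,x_{n+1})+qp_{bl}(x_{n+1},z)]$ and its mirror image, then letting $n\to\infty$, every term involving $x_{n+1}$ vanishes and one is left with $0\geq \tfrac{K}{s}[qp_{bl}(u,z)+qp_{bl}(z,u)]$; hence $qp_{bl}(u,z)=0$, so $u=z$ by (QPbl$_{1}$) and $Tz=z$. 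Uniqueness is immediate: if $Tz=z$ and $Tw=w$ then $qp_{bl}(z,w)=qp_{bl}(Tz,Tw)\geq K[qp_{bl}(z,w)+qp_{bl}(w,z)]\geq K\,qp_{bl}(z,w)$, which forces $qp_{bl}(z,w)=0$ since $K>2s>1$, and therefore $z=w$.

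I expect the only genuinely delicate step to be the passage to $u=z$: it requires applying (QPbl$_{4}$) in the correct orientation so that the unknown quantity $qp_{bl}(u,z)+qp_{bl}(z,u)$ is isolated on the favourable side of the inequality, after which the vanishing of the mixed terms $qp_{bl}(x_{n+1},z)$ and $qp_{bl}(z,x_{n+1})$ finishes the argument. The recursion and the appeal to Lemma~\ref{lemma5.12} are routine once the coefficient bound $\lambda=1/K<1/(2s)$ has been recorded.
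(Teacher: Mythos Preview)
Your proof is correct and follows the same skeleton as the paper's proof of the preceding theorem: build the backward orbit $\{x_n\}$ with $Tx_{n+1}=x_n$, feed consecutive pairs into the expansive inequality to get the recursion needed for Lemma~\ref{lemma5.12}, pass to the $0$-limit $z$, and then use surjectivity plus (QPbl$_4$) to identify the preimage $u$ of $z$ with $z$ itself. The paper does not give a separate proof for this corollary; it is stated as an immediate consequence of the theorem with $a_1=K$ and $a_2=a_3=a_4=0$. You rightly observe that the theorem as written demands $a_i>0$, so a literal appeal to it is blocked, and you resolve this by rerunning the argument in the degenerate case --- which is exactly what the paper's proof collapses to when those three coefficients vanish. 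Your handling of the existence step (isolating $qp_{bl}(u,z)+qp_{bl}(z,u)$ via two applications of (QPbl$_4$) and then letting $n\to\infty$) and of uniqueness (using $K>1$) matches the paper's computations line for line once the superfluous terms are dropped.
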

\begin{example}
\emph{Let $X=[0,1]$. Define $qp_{bl}: X \times X \rightarrow [0,\infty)$ as\\
 $qp_{bl}(x,y)=\left\{\begin{array}{cc}
(x+y)^2,& \mbox{if}\thinspace  x \neq y,\\
0, &\mbox{if} \thinspace x=y.
\end{array}
\right.$\\
 Then $(X,qp_{bl})$ is a 0-complete quasi-partial b-metric-like space with $s=2$. Define $T:X \rightarrow X$ by $Tx= 3x \sqrt{1+x^2}$. Clearly $T$ is surjective. Then
 \begin{align*}
qp_{bl}(Tx,Ty)&=\Big(3x \sqrt{1+x^2}+3y \sqrt{1+y^2})^2\\
& \geq (3x+3y)^2\\
&=\frac{9}{2}[(x+y)^2+(y+x)^2].
\end{align*} 
The conditions of Corollary \ref{corollary5.14} are satisfied for $K= \frac{9}{2}$. Then by Corollary \ref{corollary5.14}, $T$ has a unique fixed point. Hence, 0 is the unique fixed point of $T$.}
\end{example}
 \section*{Acknowledgements}
 The corresponding author(Manu Rohilla) is supported by UGC Non-NET fellowship (Ref.No. Sch/139/Non-NET/Math./Ph.D./2017-18/1028).

\end{document}